\def\softd{{\leavevmode\setbox1=\hbox{d}%
\hbox to 1.05\wd1{d\kern-0.4ex{\char039}\hss}}}
\def\softt{{\leavevmode\setbox1=\hbox{t}%
\hbox to \wd1{t\kern-0.6ex{\char039}\hss}}}
\def\softl{l\kern-0.45ex\raise0.1ex\hbox{'}\kern-0.10ex}
\def\softL{L\kern-0.8ex\raise0.1ex\hbox{'}\kern0.1ex}
\def\Ekviv{\hbox{$\Longleftrightarrow$}}
\newtheorem{lemma}{Lemma}[section]
\newtheorem{theorem}[lemma]{Theorem}
\newtheorem{proposition}{Proposition}[section]
\newcommand{\cal}{\mathcal}
\newcommand{\Ecal}{{\cal E}}
\newcommand{\bmdot}{\bm{\cdot}}
\newcommand{\bmto}{\bm{\to}}
\newcommand{\bmleadsto}{\bm{\leadsto}}
\renewcommand{\phi}{\varphi}
\newtheorem{corollary}[lemma]{Corollary}
\newtheorem{definition}[lemma]{Definition}
\newcommand{\mathify}[1]{\ifmmode{#1}\else\mbox{$#1$}\fi}
\newcommand{\cle}{\mathrel{\raisebox{0ex}[1.5ex][.4ex]{\raisebox{.25ex}{$\sqsubset$}\kern-1.75ex\rai
sebox{-.9ex}{$\sim$}}}}
\newcommand{\cge}{\mathrel{\raisebox{0ex}[1.5ex][.4ex]{\raisebox{.25ex}{$\sqsupset$}\kern-1.75ex\ra
isebox{-.9ex}{$\sim$}}}}
\newcommand{\ua}{\mathord{\uparrow}}
\newcommand{\impl}{\mbox{$
    \mathop{-\raise.8pt\hbox{\mathsurround=0pt$\!\scriptstyle\circ$}}$}}
\def\squig{\leadsto}
\def\logoesf{
\begin{tabular}{l l}
\begin{tabular}{c}
{Supported by}\\
\phantom{\huge X}
\end{tabular}& \ \resizebox{8.58cm}{!}{\includegraphics{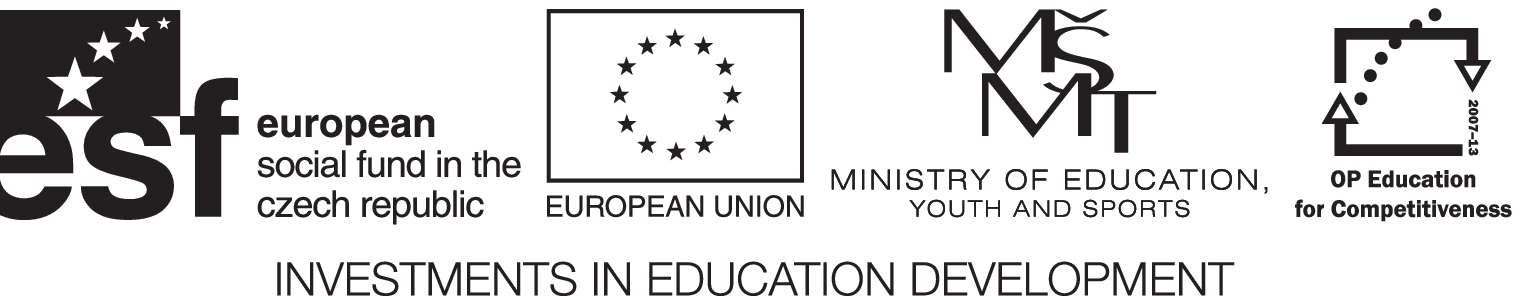}}
\end{tabular}}
\begin{document}

   \title{Filters on some classes of quantum B-algebras}
    \author{Michal Botur, Jan Paseka}
\address{Palack\' y University Olomouc, Faculty of Sciences, t\v r. 17.listopadu 1192/12, Olomouc 771 46, Czech Republic}
\email{michal.botur@upol.cz}
\address{Department of Mathematics and Statistics, Faculty of Science,
Masaryk University, {Kotl\'a\v r{}sk\' a\ 2}, 611~37 Brno, 
Czech Republic}

\email{paseka@math.muni.cz}

\thanks{Both authors acknowledge the support by Austrian Science Fund (FWF): project I 1923-N25 
and by ESF Project CZ.1.07/2.3.00/20.0051
Algebraic methods in Quantum Logic of the Masaryk University.\\
\logoesf%
}
 \date{}

\maketitle

\begin{abstract} In this paper, we continue the study of quantum B-algebras 
with emphasis on filters on integral quantum B-algebras. We then study filters in the setting of pseudo-hoops. 
First, we establish an embedding of a cartesion product of polars of a pseudo-hoop  into itself. 
Second, we give sufficient conditions for a pseudohoop to be subdirectly reducible. 
We also  extend the result of Kondo and Turunen to the setting of 
noncommutative residuated $\vee$-semilattices that,  if prime filters and $\vee$-prime filters 
of a  residuated $\vee$-semilattice $A$ coincide,
then $A$ must be a pseudo MTL-algebra.
\end{abstract}

\parskip7pt plus2pt minus1pt

{\small
\noindent{\bf Keywords and phrases:}\ quantale, quantum B-algebra, filter, prime filter, pseudo-hoop, 
pseudo MTL-algebra.

\parskip7pt plus2pt minus1pt

\noindent{\bf AMS Subject Classification:}\ 03G12, 03G27, 
06F07, 06F15, 06A35

}

\parskip7pt plus2pt minus1pt

\section{Introduction}

The term {\em quantale} was suggested by
C.J.~Mulvey at the Oberwolfach Category Meeting
\ (see \cite{mulvey}) as
 "a quantization" of the term {\em locale}.
Locales  form an order-theoretic counterpart of
topological spaces and are therefore able to
describe commutative C$^{*}$-algebras. The main aim of
C.J.~Mulvey has been to find a substitute of locales
which could play the same r{\^o}le for general C$^{*}$-algebras 
to establish a generalized Gelfand--Naimark duality
for all C*-algebras and study non-commutative topology. 
Quantales are also applied in linear and other substructural logics 
and automaton theory. An important moment in the development of the theory of quantales
was the realization that quantales give a semantics for
propositional linear logic in the same way as Boolean
algebras give a semantics for classical propositional
logic (see \cite{girard}). Quantales arise naturally as lattices of ideals, subgroups,
or other suitable substructures of algebras, and then they are
called \emph{spectra}. 

By definition, a {\em quantale}
is a complete lattice $Q$ with an associative multiplication 
$\cdot$ that distributes over arbitrary joins.  By the completeness of $Q$, there are left and right
adjoint operations (called {\em residuals}) $\to$ and $\leadsto$ of  $\cdot$ such that 
$$
\phantom{xxxxx}x\leq y\to z\ \text{if and only if}\ 
x\cdot y\leq z\ \text{if and only if}\  
y\leq x\leadsto z. \phantom{xxxxx}\text{\rm (R)}
$$

Note that, as was mentioned in \cite{rump} and \cite{ruya}, in any quantale the following conditions are 
satisfied:
$$
\begin{array}{c}
\begin{array}{r c l}
y \to z &\leq& (x \to y) \to (x \to z)\\
y \leadsto z &\leq& (x \leadsto y) \leadsto (x \leadsto z)\\
\end{array}\\
\phantom{xxxxx}\phantom{xxxxx}\phantom{xxxxx}y \leq z  
\implies  x \to y \leq x \to  z\phantom{xxxxx}\phantom{xxxxx}\phantom{xxxxx}\text{\rm (QB)}\\
x \leq y \to z\  \Ekviv\  y \leq x \leadsto z.
\end{array}
$$

This lead Rump and Yang in \cite{ruya} to introduce {\em quantum B-algebras} which 
formalize the implicational part of the logic of quantales. Note that quantum B-algebras 
encompass pseudo-BCK algebras, partially ordered monoids with two residuals satisfying 
(R) and generalized pseudoeffect algebras. Moreover, in \cite{ruya} they established 
a one-to-one correspondence between quantum B-algebras and so-called 
logical quantales.

In this paper, we continue the study of quantum B-algebras 
from \cite{ruya,rump} with emphasis on filters 
on integral quantum B-algebras. Namely, the filter theory of logical algebras (see e.g. 
\cite{gasse,zhu}) 
plays an  significant role in studying these algebras and the completeness of the 
corresponding non-classical logics. It is natural to consider filters of algebras which
are corresponding to congruences and to investigate quotient
algebras by such filters. Recall that, from a logical point of view, filters correspond 
to sets of provable formulas. 

During the last decade study of many-valued reasoning a lot of  noncommutative generalizations,  
which generalize MV-algebras developed by C.C. Chang \cite{Chan},  were introduced. Let 
us mention  for example pseudo MV-algebras \cite{GeIo} (independently introduced also 
in \cite{Rac} as generalized MV-algebras), pseudo BL-algebras \cite{DGI1, DGI2} and 
pseudo-hoops,\cite{GLP}. We recall that pseudo BL-algebras are also a noncommutative generalization 
of P. H\'ajek's BL-algebras: a variety that is an algebraic counterpart of fuzzy
logic \cite{Haj}. Therefore, a pseudo BL-algebra is an algebraic
presentation of a non-commutative generalization of fuzzy logic. These structures are studied 
also in the area of quantum structures, see \cite{MNRP}.

However, as it was recently recognized,  many of these notions have a very close connections 
with notions introduced already by B. Bosbach in his pioneering papers on various classes of 
semigroups: among others he introduced complementary semigroups (today known as pseudo-hoops). 
A deep investigation of these structures can be found in his papers \cite{Bos1, Bos2}; more information 
is available in his recent papers \cite{Bos3,Bos4}. Nowadays, all these structures can be also studied 
under one common roof, as residuated lattices, \cite{GaTs}. The theory of filters, representations 
and normal-valued basic pseudo-hoops was studied in \cite{BDK}. Now all these structures are  intensively studied by many experts (see  \cite{JiMo},\cite{Dvu4}, \cite{AgMo},  \cite{DGK}).

The paper is organized as follows. 
After introducing several necessary algebraic concepts as quantale or quantum B-algebra in Section \ref{basicn} 
we introduce following  \cite{ruya,rump} a multiplication $\cdot$ on the complete lattice $U(A)$ of upper subsets 
of a quantum B-algebra $A$ that makes $U(A)$ a quantale.  Filters 
on an integral quantum B-algebra $A$ are exactly idempotent elements of $U(A)$. 

In  Section \ref{filtersb} we show that, for a filter $F$ of an integral quantum B-algebra $A$, the 
set $U(F)$ of upper subsets of  the filter $F$ is a subquantale of the quantale $U(A)$ using a map 
$\mu_F:U(A) \to U(A)$. Further, we establish basic properties of the map $\mu_F$.

In Section \ref{michal} we study filters in the setting of pseudo-hoops. First, we establish 
an embedding of a cartesion product of polars of a pseudo-hoop  into itself. 
Second, we give sufficient conditions for a pseudohoop to be subdirectly reducible. 

In Section \ref{filterkondo} we extend the result of Kondo and Turunen (see \cite{kondotur}) to the setting of 
noncommutative residuated $\vee$-semilattices that,   if prime filters and $\vee$-prime filters 
of a  residuated $\vee$-semilattice $A$ coincide,
then $A$ must be a pseudo MTL-algebra.

The terminology and symbols used here
coincide in general with those used in \cite{ono}.

\section{Basic notions}\label{basicn}

Now, let us proceed by stating the definitions, some of them well known.

 A {\em quantum B-algebra} is a poset
$A$ with two binary operations $\to$ and $\leadsto$ satisfying conditions
$$
\begin{array}{c}
\begin{array}{r c l}
y \to z &\leq& (x \to y) \to (x \to z)\\
y \leadsto z &\leq& (x \leadsto y) \leadsto (x \leadsto z)\\
\end{array}\\
y \leq z  \implies  x \to y \leq x \to  z
\end{array}
$$
and the equivalence
$$
x \leq y \to z\  \Ekviv\  y \leq x \leadsto z
$$
for all $x,y, z\in A$. A quantum B-algebra $A$ is {\em unital}
if $A$ admits an element $u$, the {\em unit element}, which
satisfies $u \to x = u \leadsto x = x$ for all $ x \in A$.
A unit element is unique.  

The unit element  reduces the relation $\leq$ to
the operations  $\to$ and $\leadsto$:\\
$$
x \leq y  \  \Ekviv\  u \leq x \to y   \  \Ekviv\  u \leq x \leadsto y.
$$
Thus, if the unit element $u$ is  the greatest element of  $A$, the relation
$x \leq y$ just means that $x \to y$ is true.

 An {\em integral quantum B-algebra}  or a {\em pseudo BCK-algebra} is a unital quantum B-algebra $A$ such that
$u$ is the top element of $A$, i.e. $u=1$. 

 A {\em residuated poset} is a partially ordered semigroup 
$(A; \cdot)$ with two
binary operations $\to$ and $\leadsto$ which satisfy 
$$
x \cdot y \leq z\ \Ekviv \    x \leq y \to z  \  \Ekviv\  y \leq x \leadsto z.
$$
Every residuated poset is a quantum B-algebra. 
A residuated poset $(A; \cdot, \to, \leadsto, \leq)$ is called 
{\em 2-sided} if $x\cdot y\leq x$ and $x\cdot y\leq y$ 
for all $x, y\in A$. We say that residuated poset $(A; \cdot, \to, \leadsto, \leq)$ is 
a {\em residuated $\vee$-semilattice} if $(A;\vee)$ is a semilattice with respect to the order $\leq$.

A {\it quantale\/} is a complete lattice $Q$ with an associative
binary multiplication satisfying
$$
x\cdot\bigvee\limits_{i\in I}
x_i=\bigvee\limits_{i\in I}x\cdot
x_i\ \ \hbox{and}\ \ (\bigvee\limits_{i\in I}x_i)\cdot
x=\bigvee\limits_{i\in I}x_i\cdot x
$$
for all $x,\,x_i\in Q,\,i\in I$ ($I$ is a set).

An element  $x\in Q$  is called {\it idempotent\/} if $x\cdot x=x$. 
$1$ denotes the greatest element
of $Q$, $0$ is the smallest element of $Q$.
 The set of all idempotent elements of a quantale $Q$ is
denoted by $ \Ecal{}(Q)$. We shall say that
a quantale $Q$ is said to be
idempotent if
$Q=\Ecal{}(Q)$.
In the event that $Q$ has only
one element we shall speak about a {\em trivial quantale}.

Since the operators $a\cdot -$ and $-\cdot b: Q\to Q$, $a, b\in Q$ preserve
arbitrary suprema they have right adjoints. We
shall denote them by $a\leadsto{}-$ and $b\to -$ respectively. 

Let  $a,b,c,a_i\in Q$. Then
$$a\leadsto (b\to c) = b\to (a\leadsto c), $$
\begin{align*}
a\to (b\to c) &= (ab)\to c, & b\leadsto (a\leadsto c)&= (ab)\leadsto c,\\
\left(\bigvee a_i\right)\to c &= \bigwedge(a_i\to c), & \left(\bigvee a_i\right)\leadsto c &= \bigwedge(a_i\leadsto c).
\end{align*}

Evidently, any quantale is a residuated poset and hence a quantum B-algebra.

Since every quantale Q is a complete lattice, the {\em inverse residuals}
$$
\begin{array}{r c l}
a   \rightarrowtriangle  b &:= &\bigwedge\{x \in Q \mid x \cdot  a \geq  b\}\\
a \rightarrowtail b &:= &\bigwedge\{x \in Q \mid a \cdot  x \geq  b\}
\end{array}
$$

are well-defined, too.

A non-zero element $c \in Q$ is  {\em balanced} if it satifies 
$$
c\cdot\bigwedge\limits_{i\in I}
x_i=\bigwedge\limits_{i\in I}c\cdot
x_i\ \ \hbox{and}\ \ (\bigwedge\limits_{i\in I}x_i)\cdot
c=\bigwedge\limits_{i\in I}x_i\cdot c
$$
for all $x_i\in Q,\,i\in I$ ($I$ is a set).

An element $c$ of a complete lattice $L$ is said to be 
{\em supercompact} if for any non-empty subset $X \subseteq L$, the inequality
$c \leq \bigvee X$ implies that $c \leq x$ for some $x \in X$.

For every quantum B-algebra $A$, the upper sets $X\subseteq A$
(i. e.  the subsets $X$ with $ a \geq  b \in X$ implies $a \in X$) can be made into a quantale 
$U(A)$
by defining
$$X\cdot Y := \{a \in  A \mid (\exists y\in Y) (y\to a)\in X\}.$$

It can be shown \cite{ruya} that this gives an associative multiplication which distributes
over set-theoretic joins. Therefore, 
$$\begin{array}{c l}&X\leadsto Z :=\{y\in A \mid (\forall x\in X)(\forall z\in A)(x\leadsto z\geq y\ \implies\ z\in Z) \}\\
\quad \text{and} \quad&\\
&Y\to Z := \{x\in A \mid (\forall y\in Y)(\forall z\in A)(y\to z\geq x\ \implies z\in Z)\}.
\end{array}$$

If $A$ is a residuated poset then 
$$X\cdot Y = \{a \in  A \mid (\exists x\in X)(\exists y\in Y)(x\cdot y\leq a)\},$$
$$\begin{array}{c l}&X\leadsto Z :=\{y\in A \mid (\forall x\in X)(\forall z\in A)(x\cdot y\leq z\  \implies\ z\in Z) \}\\
\quad \text{and} \quad&\\
&Y\to Z := \{x\in A \mid (\forall y\in Y)(\forall z\in A)(x\cdot y\leq z \ \implies\ z\in Z)\}.
\end{array}$$

In this case, for any $ n \in {\mathbb N}, n\geq 1$ and any  $x\in  A$ we put 
$x^{1} = x$ and  $x^{n+1} = x^{n}\cdot  x = x\cdot  x^{n}$.

A {\em filter} $F$ of a quantum B-algebra  $A$ is a non-empty set 
$F\in U(A)$ such that $F\cdot F\subseteq F$. Note that this is equivalent 
with $z\in A, y\in F$, $y\to z\in F$ yields $z\in F$ and that $F$ is a non-empty upper subset of $A$. Recall also that 
any non-empty set $F\in U(A)$ that is idempotent is a filter. We denote by ${\mathcal F}(A)$ the set of all filters of $A$. Recall that 
any non-empty intersection of filters is again a filter and any directed union of filters is a filter.

For every non-empty subset $X \subseteq  A$, the smallest filter of  $A$ containing  $X$ 
(i.e., the intersection of all filters $F \in {\mathcal F}(A)$ such that $X \subseteq  F$)  is called 
the {\em filter generated by} $X$ and will be denoted by [X).

If $A$ is a residuated poset then  
$$[X) = \{y \in A \mid y\geq  x_1 \cdot x_2\cdot \dots  \cdot x_n\ \text{for some}\  n\in {\mathbb N},  n\geq 1\  \text{and}\  x_1, x_2, \dots, x_n \in  X\}.$$ 
Moreover, for an 2-sided residuated poset $A$ such that  $F$ is a filter and $a\in A$, $a\notin F$ we have that 
$$
\begin{array}{r c l l}
[F\cup \{a\})&=&
 \{y \in A \mid& y\geq  x_1 \cdot a\cdot x_2 \cdot a \cdot \dots \cdot a \cdot  x_n\ \text{for some}\  n\in {\mathbb N},  n\geq 1\\ 
& & &\text{and}\  x_1, x_2, \dots, x_n \in  F\}.\\
\end{array}
$$

 Furthermore, the set of supercompact elements of  $U(A)$
coincides with the image of the embedding 
$A \hookrightarrow U(A)$ given by $x \mapsto \ua{}x$, and every
balanced element of $U(A)$ is supercompact.  
Similarly, the  embedding 
$U(A) \hookrightarrow  U(U(A))$  is given by $X \mapsto
\ua{}X$.
A quantale $Q$ is called {\it unital\/} if
there is an element $e\in Q$ such that
$$
e\cdot a = a = a\cdot e
$$
\noindent
for all $a\in Q$.

A {\it  subquantale\/} $S$ of $Q$ is a subset of
$Q$ closed under all  suprema and  $\cdot$\,. 
 $S$ is said to be a trivial
subquantale if $S=\{ 0 \}$ or $S=Q$.
A quantic (nucleus) conucleus on $Q$ is a (closure)
coclosure operator $g$ such that $g(a)\cdot g(b) \leq  g(a\cdot b)$ for all $a, b \in Q$.
A quantic conucleus g is said to be trivial if $g(a)=a$  or $g(a)=0$ for all $a \in Q$. 

If $g$ is a quantic  conucleus on $Q$, then
$Q_g = \{ a \in Q \mid g(a)=a \}$ is a 
subquantale of $Q$. Moreover, if $S$ is any 
subquantale of $Q$, then $S=Q_g$ for some quantic  
conucleus $g$.

\section{Filters in Quantum B-algebras}\label{filtersb}

In this section  we show that, for a filter $F$ of an integral quantum B-algebra $A$, the 
set $U(F)$ of upper subsets of  the filter $F$ is a subquantale of the quantale $U(A)$ using a map 
$\mu_F:U(A) \to U(A)$. Further, we establish basic properties of the map $\mu_F$.

Let us put, for any $F\in U(A)$ and  $X\in U(A)$, 
$\mu_{F}(X)=F\cap X$. Then, for any $F\in U(A)$,  
$\mu_F:U(A)\to U(A)$ is an order preserving idempotent map. 
Evidently, if $X, Y\in U(A)$, $X\subseteq Y$ then 
$\mu_F(X)=\mu_{F}(Y)\cap X=\mu_{\mu_{F}(Y)}(X)$ and 
$\mu_{F}(X)=F\cap X=F\cap (F\cap X)=\mu_{F}(\mu_{F}(X))$.

\begin{lemma}\label{muf} Let $A$ be a quantum B-algebra, $X, Y, F\in  U(A)$, 
$F$ a filter of $A$. 
Then $\mu_F(X) \cdot \mu_F(Y)\subseteq \mu_F(X\cdot Y)$. Moreover 
$\mu_F$ is a conucleus on $U(A)$ and the set 
$U(F)=\{ U\in U(A)\mid U\subseteq F\}=\{\mu_F(X)\mid X\in U(A)\}$ equipped with the 
multiplication $\cdot_F=\cdot /U(F)$ is a subquantale of $U(A)$.
\end{lemma}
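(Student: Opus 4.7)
The plan is first to establish the key subquantale-type inequality $\mu_F(X)\cdot \mu_F(Y)\subseteq \mu_F(X\cdot Y)$, which is the only place where the filter hypothesis on $F$ is actually used. Picking an arbitrary $a\in (F\cap X)\cdot (F\cap Y)$, the definition of multiplication on $U(A)$ produces some $y\in F\cap Y$ with $y\to a\in F\cap X$. Then $y\in Y$ and $y\to a\in X$ immediately give $a\in X\cdot Y$, while $y\in F$ together with $y\to a\in F$ and the filter closure condition recalled before the lemma (``$y\in F$ and $y\to a\in F$ imply $a\in F$'') force $a\in F$. Hence $a\in F\cap (X\cdot Y)=\mu_F(X\cdot Y)$, as required.

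The conucleus property of $\mu_F$ then follows from three observations already in the text preceding the statement: $\mu_F(X)=F\cap X\subseteq X$ shows $\mu_F$ is decreasing; $X\subseteq Y$ implies $F\cap X\subseteq F\cap Y$, so $\mu_F$ is order-preserving; and $\mu_F(\mu_F(X))=F\cap(F\cap X)=F\cap X$ gives idempotence. Combined with the inequality just proved, this exhibits $\mu_F$ as a quantic conucleus on $U(A)$, so by the general fact quoted right before the lemma its image is automatically a subquantale of $U(A)$.

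It remains to identify this image with $U(F)$ and to check that the restricted multiplication $\cdot_F$ is really just $\cdot$ cut down to $U(F)$. If $U\in U(A)$ satisfies $U\subseteq F$, then $\mu_F(U)=F\cap U=U$, so every such $U$ is a fixed point of $\mu_F$; conversely, every $\mu_F(X)=F\cap X$ is contained in $F$. Thus $\{\mu_F(X)\mid X\in U(A)\}=\{U\in U(A)\mid U\subseteq F\}=U(F)$. Closure under unions is immediate, since a union of subsets of $F$ is contained in $F$. For multiplication, $F\cdot F\subseteq F$ (because $F$ is a filter) implies that $U,V\subseteq F$ force $U\cdot V\subseteq F\cdot F\subseteq F$, so $U\cdot V\in U(F)$; in particular $\mu_F(U\cdot V)=U\cdot V$, so the conucleus-induced multiplication on the image coincides with the ordinary restriction $\cdot_F$.

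The only substantive obstacle is the opening inclusion; once one notices that the filter closure condition on $F$ is tailored exactly to transport the witness $y$ and its arrow $y\to a$ simultaneously into $F$, the rest of the argument is bookkeeping around the conucleus formalism already recalled in Section~\ref{basicn}.
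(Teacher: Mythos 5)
Your proof is correct and follows essentially the same route as the paper's: the paper also takes $a\in\mu_F(X)\cdot\mu_F(Y)$, extracts a witness $y\in F\cap Y$ with $y\to a\in F\cap X$, and concludes $a\in F\cdot F\subseteq F$ and $a\in X\cdot Y$, declaring the conucleus and subquantale claims ``evident.'' Your additional bookkeeping (decreasing, monotone, idempotent, identification of the image with $U(F)$, closure under unions and $\cdot$) just fills in what the paper leaves to the reader, and it is all accurate.
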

\begin{proof} Assume that $a\in A$  and $a\in \mu_F(X) \cdot \mu_F(Y)$. Then 
there is $y\in F\cap Y$ such that 
$y\to a\in F\cap X$. It follows that  $a\in F\cdot F\subseteq F$ and $a\in X\cdot Y$. Therefore 
$a\in F\cap (X\cdot Y)$. The remaining part is evident.
\end{proof}

In what follows let $A$ be an integral quantum B-algebra. 
Note also that,  for any $F\in U(A)$ and  $X\in U(A)$ such that 
$1\in X\cap F$, 
 $F\cdot X\supseteq F\cup X$, $1\in \mu_{F}(X)\to X$ and $1\in \mu_{F}(X)\leadsto X$. 
In particular, $F$ is an filter if and only if $F\cdot F=F$ and $1\in F$. Moreover, 
for  any $X\in U(A)$, $X=\{1\}\cdot X=X\cdot \{1\}$.

\begin{proposition}\label{mufid} Let $A$ be an integral quantum B-algebra, $X, Y, F\in  U(A)$, 
$F$ a filter of $A$. 
Then the following holds: 
\begin{enumerate} 
\item $\mu_F(X)=\mu_{\mu_F(X)\cdot \mu_F(X)}(X)$; 
\item If $1\in \mu_{F}(X)\leadsto (\mu_{F}(X)\to X)$ then $\mu_{F}(X)\cdot \mu_F( \mu_{F}(X)\leadsto (\mu_{F}(X)\to X))\cdot \mu_{F}(X)=\mu_F(X)$; 
\item $\mu_F(X)$ is a filter of $A$ iff $1\in \mu_F(X)\cap \mu_{F}(X)\leadsto (\mu_{F}(X)\to X)$.
\end{enumerate}
\end{proposition}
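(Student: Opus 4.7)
The plan is to denote $M:=\mu_F(X)=F\cap X$ and reduce each ``$1\in\cdots$'' hypothesis, via the quantale adjunctions in $U(A)$, to an inclusion of upper subsets of $A$. Two standing facts will be used throughout: $M\cdot M\subseteq F\cdot F=F$ (since $F$ being a filter forces $F\cdot F=F$), and $M$ is either empty or contains $1$ ($1$ being the top of $A$ and $M$ an upper set). I will also lean on $\ua 1=\{1\}$ being the unit of $U(A)$, which follows from $1\to a=1\leadsto a=a$ in the integral setting.

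For (1), I would expand $\mu_{M\cdot M}(X)=(M\cdot M)\cap X$ and show both inclusions with $M$. The direction $\subseteq$ is immediate from $M\cdot M\subseteq F$, which gives $(M\cdot M)\cap X\subseteq F\cap X=M$. For $\supseteq$, any $a\in M$ lies in $M\cdot M$ via the witness $y:=a$: $y\in M$ and $y\to a=a\to a=1\in M$ (as $M$ is non-empty, hence contains $1$), while also $a\in X$.

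For (3), the key is the adjunction $\{1\}\subseteq Y\leadsto Z\Leftrightarrow Y\cdot\{1\}\subseteq Z\Leftrightarrow Y\subseteq Z$, which yields
\[
1\in M\leadsto(M\to X)\ \Leftrightarrow\ M\subseteq M\to X\ \Leftrightarrow\ M\cdot M\subseteq X.
\]
Combined with the automatic $M\cdot M\subseteq F$, this is equivalent to $M\cdot M\subseteq M$; together with $1\in M$ (non-emptiness of $M$), this is exactly the filter condition on $M$.

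For (2), granting the hypothesis, the analysis in (3) will give $M\cdot M=M$ (the reverse inclusion is $M=M\cdot\{1\}\subseteq M\cdot M$ since $1\in M$). Setting $N:=\mu_F(M\leadsto(M\to X))$, the hypothesis forces $1\in N$, so $\{1\}\subseteq N$ and
\[
M\ =\ M\cdot\{1\}\cdot M\ \subseteq\ M\cdot N\cdot M.
\]
For the opposite inclusion, $N\subseteq M\leadsto(M\to X)$ together with two adjunction steps delivers $M\cdot N\cdot M\subseteq X$, while $M,N\subseteq F$ gives $M\cdot N\cdot M\subseteq F\cdot F\cdot F=F$; intersecting yields $M\cdot N\cdot M\subseteq F\cap X=M$. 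The main thing to watch is simply keeping the operations on $A$ and on the quantale $U(A)$---which share the symbols $\cdot$, $\to$, $\leadsto$---apart so that each adjunction step is performed in the right structure.
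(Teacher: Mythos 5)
Your proof is correct and follows essentially the same route as the paper's: both arguments reduce each claim to the adjunctions $X\cdot Y\subseteq Z\Leftrightarrow Y\subseteq X\leadsto Z\Leftrightarrow X\subseteq Y\to Z$ in the quantale $U(A)$, the unit $\{1\}$, and the automatic inclusion $\mu_F(X)\cdot\mu_F(X)\subseteq F\cdot F\subseteq F$. The only cosmetic differences are your choice of witness in part (1) and your packaging of part (3) as a single chain of equivalences rather than two separately proved implications.
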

\begin{proof} (1)  Let 
$z\in \mu_{\mu_F(X)\cdot \mu_F(X)}(X)=(\mu_F(X)\cdot \mu_F(X))\cap X$. Then 
$z\in X$ and $z\in (F\cap X)\cdot (F\cap X)\subseteq F$. Hence $z\in \mu_F(X)$. 
Conversely, let $z\in \mu_F(X)$. Then $1\in \mu_F(X)$ and  
$z\in [z)\cdot [1)\subseteq (F\cap X)\cdot (F\cap X)$ 
and $z\in X$. It follows that $z\in \mu_{\mu_F(X)\cdot \mu_F(X)}(X)$.\\
(2) Evidently, $ \mu_{F}(X)\subseteq  \mu_{F}(X)\cdot \mu_F( \mu_{F}(X)\leadsto (\mu_{F}(X)\to X))\cdot \mu_{F}(X)$. 
To show the converse direction let us compute:
$$
\begin{array}{l}
\mu_{F}(X)\cdot \mu_F( \mu_{F}(X)\leadsto (\mu_{F}(X)\to X))\cdot \mu_{F}(X) \subseteq \\
\mu_{F}(X)\cdot (\mu_{F}(X)\leadsto (\mu_{F}(X)\to X))\cdot \mu_{F}(X)) \subseteq 
\mu_{F}(X)\cdot (\mu_{F}(X)\to X)\subseteq X.
\end{array}
$$
Since $ \mu_{F}(X)\subseteq F$ and $\mu_F( \mu_{F}(X)\leadsto (\mu_{F}(X)\to X))\subseteq F$ we get 
that  $\mu_{F}(X)\cdot \mu_F( \mu_{F}(X)\leadsto (\mu_{F}(X)\to X))\cdot \mu_{F}(X) \subseteq F$. 
It follows that 
$$
\mu_{F}(X)\cdot \mu_F( \mu_{F}(X)\leadsto (\mu_{F}(X)\to X))\cdot \mu_{F}(X) \subseteq \mu_{F}(X).$$

\noindent{}(3) Evidently, if $1\in \mu_{F}(X)\leadsto (\mu_{F}(X)\to X)$ then 
$\{1\}\subseteq \mu_{F}(X)\leadsto (\mu_{F}(X)\to X)$. This yields that 
  $\mu_F(X)\cdot \mu_F(X) =\mu_F(X)\cdot \{1\}\cdot  \mu_F(X) \subseteq  
\mu_F(X) \cdot (\mu_{F}(X)\leadsto (\mu_{F}(X)\to X))\cdot \mu_F(X) \subseteq \mu_F(X)$. 
Since  $1\in \mu_F(X)$   we get that $\mu_F(X)$ is a filter of $A$.

Conversely, let $\mu_F(X)$ be a filter of $A$. Then 
  $\mu_F(X)\cdot \mu_F(X) \subseteq  \mu_F(X) \subseteq X$. It follows that 
$\mu_F(X)\cdot \mu_F(X) \subseteq  \mu_F(X) \subseteq \mu_{F}(X)\to X$.  
By the same reasoning $ 1\in \mu_F(X) \subseteq \mu_{F}(X)\leadsto (\mu_{F}(X)\to X)$.
\end{proof}

\begin{proposition}\label{mufc} Let $A$ be an integral quantum B-algebra, $X, Y, F\in  U(A)$, $F$ a filter of $A$ 
and $1\in X\cap F$.
Then the following holds 
\begin{enumerate}
\item $\mu_F(X)\cdot \mu_F( \mu_{F}(X)\leadsto X)  = \mu_F(X)=\mu_F(\mu_{F}(X)\to X)\cdot \mu_{F}(X)$; 
\item $\mu_F( \mu_{F}(X)\to X)\to (\mu_{F}(X)\to X)=\mu_F(X)\to X$;
\item $\mu_F( \mu_{F}(X)\leadsto X)\leadsto (\mu_{F}(X)\leadsto X)=\mu_F(X)\leadsto X$; 
\item $\mu_F( \mu_{F}(X)\to X) \cdot (\mu_{F}(X)\to X)=\mu_{F}(X)\to X$;
\item $( \mu_{F}(X)\leadsto X) \cdot \mu_F(\mu_{F}(X)\leadsto X)=\mu_{F}(X)\leadsto X$;
\item $\mu_F(\mu_{F}(X)\to X)=\mu_F( \mu_{F}(X)\to X) \cdot \mu_F(\mu_{F}(X)\to X)$ 
and $\mu_F(\mu_{F}(X)\to X)$ is a filter of $A$ whenever $1\in \mu_F(\mu_{F}(X)\to X)$.
\item $\mu_F(\mu_{F}(X)\leadsto X)=\mu_F( \mu_{F}(X)\leadsto X) \cdot \mu_F(\mu_{F}(X)\leadsto X)$ 
and $\mu_F(\mu_{F}(X)\leadsto X)$ is a filter of $A$ whenever $1\in \mu_F(\mu_{F}(X)\leadsto X)$.
\end{enumerate}
\end{proposition}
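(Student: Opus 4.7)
All seven identities are handled uniformly by translating the statement into a containment of upper sets in the quantale $U(A)$, then exploiting the two adjunctions (R) together with the trivial observation that $1 \in F \cap X$ forces $1$ to lie in every auxiliary upper set that appears (namely $\mu_F(X)$, $\mu_F(X) \to X$, $\mu_F(X) \leadsto X$, and $\mu_F(\mu_F(X) \to X)$). Integrality of $A$ enters through the equivalence $x \leadsto z = 1 \Leftrightarrow x \leq z$ (and its $\to$-analogue), which together with upperness of $X$ supplies these basic memberships of $1$.

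For $(1)$, the inclusion $\mu_F(X) \cdot \mu_F(\mu_F(X) \leadsto X) \subseteq \mu_F(X)$ combines the adjoint containment $\mu_F(X) \cdot (\mu_F(X) \leadsto X) \subseteq X$ with the fact that both factors lie in the filter $F$, so the product lies in $F \cdot F \subseteq F$, and hence in $F \cap X = \mu_F(X)$. The reverse inclusion is immediate from $1 \in \mu_F(\mu_F(X) \leadsto X)$. The second half of $(1)$ is symmetric.

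Writing $P := \mu_F(X) \to X$, statement $(2)$ is handled as follows. The inclusion $\mu_F(P) \to P \subseteq P$ uses $1 \in \mu_F(P)$ (applied in the definition of $Y \to Z$ with $y = 1$, $z = x$). For the reverse, by adjunction $P \subseteq \mu_F(P) \to P$ is equivalent to $P \cdot \mu_F(P) \cdot \mu_F(X) \subseteq X$; applying $(1)$ to collapse the trailing pair via $\mu_F(P) \cdot \mu_F(X) = \mu_F(X)$ reduces this to $P \cdot \mu_F(X) \subseteq X$, which is the defining property of $P$. Statement $(3)$ is the mirror argument using the other half of $(1)$.

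The remaining statements $(4)$--$(7)$ concern a left/right symmetry that is not automatic in the noncommutative setting. The $\supseteq$-directions of $(4)$ and $(5)$ are immediate from $1 \in \mu_F(P)$. I would first derive idempotency $\mu_F(P) \cdot \mu_F(P) = \mu_F(P)$ directly from $(1)$ applied twice: the computation $\mu_F(P) \cdot \mu_F(P) \cdot \mu_F(X) = \mu_F(P) \cdot (\mu_F(P) \cdot \mu_F(X)) = \mu_F(P) \cdot \mu_F(X) = \mu_F(X) \subseteq X$ gives $\mu_F(P) \cdot \mu_F(P) \subseteq P$, and combining with $\mu_F(P) \cdot \mu_F(P) \subseteq F \cdot F \subseteq F$ yields $\mu_F(P) \cdot \mu_F(P) \subseteq F \cap P = \mu_F(P)$, with the reverse inclusion from $1 \in \mu_F(P)$; hence $\mu_F(P)$ is an idempotent upper set containing $1$, i.e.\ a filter, giving $(6)$ (and $(7)$ is symmetric). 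The principal obstacle is then $\mu_F(P) \cdot P \subseteq P$ in $(4)$: unlike in $(2)$, the absorbing element $\mu_F(P)$ now sits on the opposite side of $P$ from $\mu_F(X)$, so $(1)$ cannot be applied by direct adjacency across the middle factor. I plan to tackle this by unpacking $a \in \mu_F(P) \cdot P$ through the definition of the product in $U(A)$ and transporting the defining condition of $P$ through the QB identity $y \to z \leq (p \to y) \to (p \to z)$, invoking the filter structure of $\mu_F(P)$ just established to close the chase; $(5)$ is then dual, with $\leadsto$ in place of $\to$.
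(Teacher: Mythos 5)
Your handling of items (1)--(3) is sound and essentially the paper's argument (the paper does (2) as the one-line chain $\mu_F(P)\to(\mu_F(X)\to X)=(\mu_F(P)\cdot\mu_F(X))\to X=\mu_F(X)\to X$ using $(ab)\to c=a\to(b\to c)$ and (1), where $P:=\mu_F(X)\to X$; your two-inclusion version is equivalent). Your derivation of (6)--(7) directly from (1), via $\mu_F(P)\cdot\mu_F(P)\cdot\mu_F(X)=\mu_F(P)\cdot\mu_F(X)=\mu_F(X)\subseteq X$ and residuating, is correct and is in fact a clean shortcut that bypasses (4) entirely, whereas the paper routes (6) through (4).

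The genuine gap is in (4) and (5): there you stop at a plan rather than a proof, and the plan targets an inclusion the residuations will not deliver. Since $Z\subseteq P$ iff $Z\cdot\mu_F(X)\subseteq X$, the absorbing factor $\mu_F(P)$ must sit to the \emph{right} of $P$: one gets $(P\cdot\mu_F(P))\cdot\mu_F(X)=P\cdot(\mu_F(P)\cdot\mu_F(X))=P\cdot\mu_F(X)\subseteq X$ by (1), hence $P\cdot\mu_F(P)\subseteq P$; but the analogous computation for $\mu_F(P)\cdot P$ only reduces the problem to $\mu_F(P)\cdot X\subseteq X$, which is not among the hypotheses, and your proposed rescue via $y\to z\leq(x\to y)\to(x\to z)$ is never carried out and would face exactly that obstruction. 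The paper's own proof confirms this: it uses (2) to rewrite $P=\mu_F(P)\to P$ and applies the counit $(\mu_F(P)\to P)\cdot\mu_F(P)\subseteq P$, thereby proving $P\cdot\mu_F(P)=P$ --- the identity with the factors in the order \emph{opposite} to the one displayed in item (4) (dually, the provable form of (5) is $\mu_F(Q)\cdot Q=Q$ for $Q=\mu_F(X)\leadsto X$). In other words, the displayed (4) and (5) have their factors transposed, and you have in effect detected this; but a complete solution must either prove the transposed identities (a one-liner from (2)/(3) plus the counit, as in the paper) or justify the literal ones, and your proposal does neither.
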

\begin{proof} (1) Since $1\in \mu_F( \mu_{F}(X)\leadsto X) \cap \mu_F(X)$ we obtain that 
$\mu_F(X)\cdot\mu_F( \mu_{F}(X)\leadsto X) \supseteq  \mu_F(X)$. 
Conversely, we have $\mu_F(X)\cdot\mu_F( \mu_{F}(X)\leadsto X)\subseteq %
( \mu_{F}(X)\leadsto X) \cdot \mu_F(X)\subseteq X$ and 
$\mu_F(X)\cdot\mu_F( \mu_{F}(X)\leadsto X)\subseteq F\cdot F\subseteq F$. 
It follows that $\mu_F(X)\cdot\mu_F( \mu_{F}(X)\leadsto X)\subseteq \mu_F(X)$. 
The remaining part follows by analogous considerations.\\
(2)  $\mu_F( \mu_{F}(X)\to X)\to (\mu_{F}(X)\to X)= (\mu_F( \mu_{F}(X)\to X)\cdot  \mu_{F}(X))\to X=\mu_F(X)\to X$.\\
(3) As in (2).\\
(4) Since $1\in \mu_F( \mu_{F}(X)\to X) \cap (\mu_{F}(X)\to X)$ we get  
$(\mu_{F}(X)\to X)\cdot\mu_F( \mu_{F}(X)\to X)  \supseteq \mu_{F}(X)\to X$. To prove the converse direction let us compute:
$$
\begin{array}{r @{}l}
(\mu_{F}(X)\to X)\cdot &\mu_F( \mu_{F}(X)\to X) =\\
&(\mu_F( \mu_{F}(X)\to X)\to (\mu_{F}(X)\to X))\cdot \mu_F( \mu_{F}(X)\to X) \subseteq\\
&\mu_{F}(X)\to X.\\
\end{array}
$$
Whence $ (\mu_{F}(X)\to X)\cdot \mu_F( \mu_{F}(X)\to X) =\mu_{F}(X)\to X$.\\
(5) As in (4).\\
(6) Evidently, $\mu_F(\mu_{F}(X)\to X)\supseteq\mu_F( \mu_{F}(X)\to X) \cdot \mu_F(\mu_{F}(X)\to X)$.
Conversely, we 
have 
$$
\begin{array}{rcl}
\mu_F( \mu_{F}(X)\to X) \cdot \mu_F(\mu_{F}(X)\to X)%
&\subseteq& \mu_F( \mu_{F}(X)\to X) \cdot (\mu_{F}(X)\to X)=\\
&&\mu_{F}(X)\to X
\end{array}
$$
and $\mu_F( \mu_{F}(X)\to X) \cdot \mu_F(\mu_{F}(X)\to X)\subseteq F\cdot F\subseteq F$. Consequently, we obtain that 
$1\in \mu_F(\mu_{F}(X)\to X)=\mu_F( \mu_{F}(X)\to X) \cdot \mu_F(\mu_{F}(X)\to X)$. 
It follows that 
$\mu_F(\mu_{F}(X)\to X)$ is a filter of $A$.\\
(7) As in (6).
\end{proof}

\section{Filters on pseudo-hoops}

\label{michal}

In the present section  we study filters in the setting of pseudo-hoops. First, we establish 
an embedding of a cartesion product of polars of a pseudo-hoop ${\mathbf A}$ into ${\mathbf A}$. 
Second, we give sufficient conditions for a pseudohoop to be subdirectly reducible.

We recall that according to \cite{GLP}, a \textit{pseudo-hoop} is
an algebra $\mathbf M=(M; \cdot, \to,\squig,1)$ of type $\langle 2,2,2,0
\rangle$ such that, for all $x,y,z \in M,$

\begin{enumerate}

\item[{\rm (i)}]   $x\cdot 1 = x = 1 \cdot x;$

 \item[{\rm (ii)}] $x\to x = 1 = x\squig x;$

\item[{\rm (iii)}] $(x\cdot y) \to z = x \to (y\to z);$

 \item[{\rm (iv)}] $(x \cdot y) \squig z = y \squig
(x\squig z);$

 \item[{\rm (v)}] $(x\to y) \cdot x= (y\to x)\cdot y =
x\cdot (x\squig y) = y \cdot (y \squig x)$ (divisibility).

\end{enumerate}

It can be easily checked that any pseudo-hoop is a residuated poset, see e.g. \cite[Proposition 2.1]{ciungu}.

If $\cdot$ is commutative (equivalently $ \to = \squig$), $\mathbf M$ is
said to be a \textit{hoop}.  If we set $x \le y$ iff $x \to y=1$
(this is equivalent to $x \squig y =1$), then $\le$ is a partial
order such that $x\wedge y = (x\to y)\cdot x=x\cdot (x\leadsto y)$ and $\mathbf M$ is a
$\wedge$-semilattice.

We say that a pseudo-hoop $\mathbf M$

\begin{enumerate}

\item[(i)] is  {\it bounded} if there is a least element $0,$
otherwise, $\mathbf M$ is {\it unbounded},

\item[(ii)] satisfies \textit{prelinearity} if, given $x,y \in M,$
$(x\to y)\vee (y\to x)$ and $(x\squig y)\vee (y\squig x)$ are
defined in $\mathbf M$ and they are equal $1,$

\item[(iii)] is \textit{cancellative} if $x\cdot y=x\cdot z$ and $s\cdot x= t
\cdot x$ imply $y= z$ and $s= t,$

\item[(iv)] is a \textit{pseudo BL-algebra}  if $\mathbf M$ is a bounded
lattice satisfying  prelinearity.
\end{enumerate}

For a pseudo BL-algebra, we define $x^-=x\to 0$ and $x^\sim =
x\squig 0.$ A pseudo BL-algebra is said to be a pseudo MV-algebra if
$x^{-\sim}=x=x^{\sim -}$ for every $x \in M.$

From (v) of the definition of pseudo-hoops we have that a pseudo
hoop is cancellative iff $x\cdot y\le x\cdot z$ and $s\cdot x\le t
\cdot x$ imply $y\le z$ and $s\le t.$

Let us have a pseudo-hoop $\mathbf A=(A;\cdot,\to,\leadsto, 1).$ Then we for any 
set $M\subseteq A$ define the set $M^\perp=\{x\in A\mid x\vee y = 1\mbox{ for any } y\in M\}.$ 
One can easily check that following conditions hold:
\begin{itemize}
\item[1)] $M\subseteq N$ yields $N^\perp\subseteq M^\perp,$
\item[2)] $M\subseteq M^{\perp\perp},$
\item[3)] $M^\perp=M^{\perp\perp\perp}.$
\end{itemize}
Consequently, $^{\perp\perp}$ is a closure operator on the subsets of $A.$ 
If $x_1\vee y=x_2\vee y=1$ then we can compute 
$x_1x_2\vee y = x_1x_2\vee x_1y\vee y = x_1(x_2\vee y)\vee y=x_1\vee y=1.$ Thus, 
the set $M^\perp$ is a filter for any $M\subseteq A.$

\begin{lemma}\label{MB-Lemma1}
Let $\mathbf A=(A;\cdot,\to,\leadsto, 1)$ be a pseudo-hoop 
and let $x,y\in A$ be such that $x\vee y=1$ then $x\cdot y = x\wedge y=y\cdot x$ and $x\to y= x\leadsto y= y.$ 
\end{lemma}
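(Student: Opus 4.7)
The key fact I would exploit is the familiar residuation identity
\[
(x\vee y)\to z \;=\; (x\to z)\wedge (y\to z), \qquad (x\vee y)\leadsto z \;=\; (x\leadsto z)\wedge (y\leadsto z),
\]
valid in any residuated poset whenever the join $x\vee y$ exists. This is a straightforward adjunction argument: $w\leq (x\vee y)\to z$ iff $w\cdot(x\vee y)\leq z$, and since $w\cdot -$ preserves existing joins (being a left adjoint), this is equivalent to $w\cdot x\leq z$ and $w\cdot y\leq z$, hence to $w\leq (x\to z)\wedge(y\to z)$. The meet on the right lives inside $\mathbf A$ because the pseudo-hoop is a $\wedge$-semilattice.

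I would also record the trivial consequences of $x\cdot 1 = 1\cdot x = x$ together with residuation, namely $1\to z=z$ and $1\leadsto z=z$ for every $z\in A$.

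Combining these, I instantiate the displayed identities at $x\vee y=1$ and $z=y$:
\[
y \;=\; 1\to y \;=\; (x\vee y)\to y \;=\; (x\to y)\wedge (y\to y) \;=\; (x\to y)\wedge 1 \;=\; x\to y,
\]
so $x\to y=y$. Exactly the same calculation with $\leadsto$ in place of $\to$ yields $x\leadsto y=y$, and swapping the roles of $x$ and $y$ gives $y\to x=x$ and $y\leadsto x=x$.

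Finally, I feed these four equalities into the divisibility axiom (v) of pseudo-hoops:
\[
(x\to y)\cdot x \;=\; x\cdot (x\leadsto y) \;=\; (y\to x)\cdot y \;=\; y\cdot (y\leadsto x) \;=\; x\wedge y.
\]
Substituting $x\to y=y$ and $y\leadsto x=x$ gives $y\cdot x = x\wedge y$, while substituting $y\to x=x$ and $x\leadsto y=y$ gives $x\cdot y = x\wedge y$, proving commutativity at the pair $(x,y)$ and identifying both products with the meet. No step is really an obstacle; the only thing to be careful about is justifying the distribution of $\to$ over an existing join before using it, which is why I foreground the adjunction argument rather than citing a lattice identity that might require the full lattice structure.
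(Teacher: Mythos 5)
Your proof is correct and follows essentially the same route as the paper: both derive $x\to y=y$ (and its three variants) from residuation together with the fact that $z\cdot{-}$ distributes over the existing join $x\vee y=1$, and then invoke the divisibility axiom to identify $x\cdot y$ and $y\cdot x$ with $x\wedge y$. The only cosmetic difference is that you package the computation as the general identity $(x\vee y)\to z=(x\to z)\wedge(y\to z)$ (using integrality in the form $(x\to y)\wedge 1=x\to y$) whereas the paper runs the same chain of equivalences directly, using integrality in the form $z\cdot y\leq y$.
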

\begin{proof} Assume that $x,y\in A$ are such that $x\vee y=1$. Then, for any $z\in A$, we have 
$z\leq y$ iff $z\leq 1\to y$ iff  $z\leq (x\vee y)\to y$ iff  $z\cdot (x\vee y)\leq y$ iff 
 $z\cdot x\vee z\cdot y\leq y$ iff  $z\cdot x\leq y$ iff $z\leq x\to y$. It follows that 
$y=x\to y$. By symmetry, $x=y\to x$.

Due to divisibility  we can 
compute $y\cdot x = (x\to y)\cdot x = x\wedge y =%
(y\to x)\cdot  y= x\cdot y .$
\end{proof}

\begin{theorem}\label{mb_theor1}
Let $\mathbf A=(A;\cdot,\to,\leadsto, 1)$ be a pseudo-hoop and let us 
have any set $M\subseteq A.$ Then the mapping $f:M^\perp\times M^{\perp\perp}\longrightarrow A$ 
defined by $f(x,y)=x\wedge y$ is a embedding from $\mathbf{M^{\perp}\times M^{\perp\perp}}$ to $\mathbf A.$ 
\end{theorem}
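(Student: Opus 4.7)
The key enabling fact from the preceding material is that $M^{\perp\perp} = (M^\perp)^\perp$: applying $^\perp$ to the identity $M^\perp = M^{\perp\perp\perp}$ yields $M^{\perp\perp} = (M^\perp)^\perp$. Hence every $x \in M^\perp$ and $y \in M^{\perp\perp}$ satisfies $x \vee y = 1$, and Lemma~\ref{MB-Lemma1} then gives $x \cdot y = x \wedge y = y \cdot x$, $x \to y = x \leadsto y = y$ and $y \to x = y \leadsto x = x$. In particular $f(x,y) = x \wedge y$ coincides with the ambient product $x \cdot y$.

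The plan is to show first that $M^\perp$ and $M^{\perp\perp}$ are subalgebras of $\mathbf{A}$ (so that the product pseudo-hoop $\mathbf{M^\perp \times M^{\perp\perp}}$ is set up as expected), next that $f$ preserves $\cdot$, $\to$ and $\leadsto$, and finally that $f$ is injective. Subalgebra status is quick: both polars are filters, hence contain $1$ and are closed under $\cdot$; and because $x_2 \leq x_1 \to x_2, x_1 \leadsto x_2$, monotonicity of $\vee$ shows that $M^\perp$ is closed under $\to,\leadsto$, and likewise for $M^{\perp\perp}$. For the product law, $f(x_1, y_1)\cdot f(x_2, y_2) = x_1 y_1 x_2 y_2 = x_1 x_2 y_1 y_2 = f((x_1, y_1)(x_2, y_2))$, using the cross-polar commutativity $y_1 x_2 = x_2 y_1$ and the fact that $x_1 x_2 \in M^\perp$, $y_1 y_2 \in M^{\perp\perp}$ so their meet equals their product.

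The crucial step is the identity $(x_1 y_1)\to(x_2 y_2) = (x_1 \to x_2)(y_1 \to y_2)$. For the $\geq$ direction I would compute $x_1 y_1 \cdot (x_1 \to x_2)(y_1 \to y_2)$, commute the two middle factors across polars, and apply divisibility twice to obtain $(x_1 \wedge x_2)(y_1 \wedge y_2)\leq x_2 y_2$. For the $\leq$ direction, set $c := (x_1 y_1)\to(x_2 y_2)$; then $x_2 y_2 \leq x_2$ and $x_2 y_2 \leq y_2$ give $y_1(x_1 c) \leq x_2$ and $x_1(y_1 c) \leq y_2$. Invoking the cross-polar identities $y_1 \to x_2 = x_2$ and $x_1 \to y_2 = y_2$ from Lemma~\ref{MB-Lemma1}, these reduce to $x_1 c \leq x_2$ and $y_1 c \leq y_2$, whence $c \leq (x_1 \to x_2)\wedge(y_1 \to y_2) = (x_1 \to x_2)(y_1 \to y_2)$. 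Preservation of $\leadsto$ runs identically with the opposite residual.

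Injectivity then drops out: if $f(x_1, y_1) = f(x_2, y_2)$, applying $f$ to $(x_1, y_1) \to (x_2, y_2)$ and using the homomorphism property yields $(x_1 \to x_2)(y_1 \to y_2) = 1$; since both factors are at most $1$ and their product is $1$, each factor equals $1$, so $x_1 \leq x_2$ and $y_1 \leq y_2$, and symmetry gives equality. The main technical obstacle I anticipate is the $\leq$ half of the $\to$-preservation: the reassociation of $x_1 y_1 c$ must be carried out in two distinct ways, and the cross-polar collapse $y_1 \to x_2 = x_2$, $x_1 \to y_2 = y_2$ from Lemma~\ref{MB-Lemma1} must be applied in exactly the right spot to pass between the ambient residual and the componentwise one.
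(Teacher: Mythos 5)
Your proposal is correct, and the engine of your argument is the same as the paper's: the cross-polar collapse $y_1\to x_2=x_2$ from Lemma~\ref{MB-Lemma1} is used to reduce the ambient residual $(x_1\cdot y_1)\to x_2$ to the componentwise one $x_1\to x_2$, and cross-polar commutativity identifies products with meets. The differences are worth noting. For preservation of $\to$ the paper proves the equality in one pass, by showing $z\leq f(x_1,y_1)\to f(x_2,y_2)$ iff $z\leq (x_1\to x_2)\wedge(y_1\to y_2)$ for every $z$; this makes your separate ``$\geq$'' computation with divisibility unnecessary. For injectivity the paper argues directly and first ($y_1=y_1\cdot(x_1\vee y_2)=x_2y_2\vee y_1y_2=(x_2\vee y_1)\cdot y_2=y_2$), whereas you deduce it afterwards from the homomorphism property via $(x_1\to x_2)\wedge(y_1\to y_2)=1$; both are valid, and yours is slicker but makes injectivity depend on the harder half of the proof. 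Your explicit check that the polars are closed under $\to$ and $\leadsto$ (from $x_2\leq x_1\to x_2$ and monotonicity of $\vee$) supplies a fact the paper uses silently when it asserts $x_1\to x_2\in M^\perp$, and is a genuine improvement. One caution: under the paper's convention $x\cdot y\leq z\Leftrightarrow x\leq y\to z$, several of your products sit on the wrong side. To bound $(x_1\to x_2)(y_1\to y_2)$ by $(x_1y_1)\to(x_2y_2)$ you must multiply by $x_1y_1$ on the right and use $(x_1\to x_2)\cdot x_1=x_1\wedge x_2$; divisibility does not identify $x_1\cdot(x_1\to x_2)$ with the meet (that is the role of $\leadsto$), so your displayed computation $x_1y_1\cdot(x_1\to x_2)(y_1\to y_2)$ does not literally go through. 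The argument survives once the sides are fixed consistently throughout, but this bookkeeping is exactly the ``technical obstacle'' you anticipated and needs to be done with care.
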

\begin{proof}
If $f(x_1,y_1)=f(x_2,y_2)$ for any 
$(x_1,y_1), (x_2,y_2)\in M^\perp\times M^{\perp\perp}$ 
then $x_1\wedge y_1=x_1\cdot y_1=y_1\cdot x_1=x_2\wedge y_2=x_2\cdot y_2=y_2\cdot x_2$ and also 
$x_i\vee y_j=1$ for any $i,j\in\{1,2\}.$ 

Firstly we prove that the mapping $f$ is a injection. We can compute 
$y_1=y_1\cdot 1= y_1\cdot (x_1\vee y_2)=y_1\cdot x_1\vee y_1\cdot y_2=%
x_2\cdot y_2\vee y_1\cdot y_2=(x_2\vee y_1)\cdot y_2=1\cdot y_2=y_2.$ Thus $y_2= y_1.$ 
By symmetry,  $x_1=x_2.$

Due to the Lemma \ref{MB-Lemma1} we can compute,  for any 
$(x_1,y_1), (x_2,y_2)\in M^\perp\times M^{\perp\perp}$, 
$f(x_1,y_1)\cdot f(x_2,y_2)=(x_1\wedge y_1)\cdot(x_2\wedge y_2)=%
(x_1\cdot y_1)\cdot (x_2\cdot y_2)=x_1\cdot (y_1\cdot x_2)\cdot y_2= %
x_1\cdot x_2\cdot y_1\cdot y_2=(x_1\cdot x_2)\wedge (y_1\cdot y_2)=f(x_1\cdot x_2,y_1\cdot y_2).$

Moreover,  for any 
$(x_1,y_1), (x_2,y_2)\in M^\perp\times M^{\perp\perp}$ and any $z\in A$, 
$z\leq (x_1\cdot y_1)\to x_2$ iff $z\cdot x_1\cdot y_1\leq x_2$ iff 
 $z\cdot x_1\leq y_1\to x_2$ iff (by Lemma \ref{MB-Lemma1}) 
$z\cdot x_1\leq x_2$ iff $z\leq x_1\to x_2$. It follows that 
$(x_1\cdot y_1)\to x_2=x_1\to x_2$. Similarly, 
$(y_1\cdot x_1)\to y_2=y_1\to y_2$. 

This yields, for  any 
$(x_1,y_1), (x_2,y_2)\in M^\perp\times M^{\perp\perp}$ and any $z\in A$, 
$z\leq f(x_1,y_1)\to f(x_2,y_2)$ iff 
$z\cdot x_1\cdot y_1\leq x_2\wedge y_2$ iff 
$z\cdot x_1\cdot y_1\leq x_2$ and $z\cdot x_1\cdot y_1\leq y_2$ iff 
$z\leq x_1\cdot y_1\to x_2$ and $z\leq x_1\cdot y_1\to y_2$ iff 
$z\leq x_1\to x_2$ and $z\leq y_1\to y_2$ iff 
$z\leq (x_1\to x_2)\wedge  (y_1\to y_2)$. Since  
$x_1\to x_2\in M^\perp$ and $y_1\to y_2\in  M^{\perp\perp}$ we get that 
$f(x_1,y_1)\to f(x_2,y_2)= f(x_1\to x_2,y_1\to y_2)$. 
Analogously, we can prove $f(x_1,y_1)\leadsto f(x_2,y_2)=f(x_1\leadsto x_2,y_1\leadsto y_2).$ 
\end{proof}

The previous Theorem shows that $M^\perp\cdot M^{\perp\perp}$ is both a filter and a sub hoop 
of  $\mathbf A$. Moreover, the sub hoop $M^\perp\cdot M^{\perp\perp}$ is directly reducible. 
The idea of our research leads us to decide whether some $a\in A$ belongs to the sub 
hoop $M^\perp\cdot M^{\perp\perp}$ or not. If $a\in M^\perp\cdot M^{\perp\perp}$ then $a=x\wedge y,$ 
where $x$ is minimal in $M^\perp$ with $a\leq x$ and analogously $y$ is minimal in $M^{\perp\perp}$ with $a\leq y.$ 
Those facts are motivation for following definition. 

\begin{definition}
Let $F$ be any filter in a pseudo-hoop $\mathbf A=(A;\cdot,\to,\leadsto,1).$ Then for any $X\subseteq A$ we define $\nu_F(X):=\{a\in F\mid x\leq a\mbox{ for any }x\in X\}.$ If there is the least element of the set $\nu_F(X)$ then we denote 
it by $\hat\nu_F(X).$ 
\end{definition}

Note that,  for any $X\subseteq A$,  $\nu_F(X)$ is a set of upper bounds of $X$ that are in $F$. 
It follows that $\nu_F(X)=\mu_F(\{z\in A \mid z \ \text{is an upper bound of}\ X\}$. 
In particular, 
$\nu_F(X)$ is an upper set such that $1\in \nu_F(X)$.

In what follows  we denote for any subsets $X,Y\subseteq A$, 
where $\mathbf A=(A;\cdot,\to,\leadsto,1)$ is a pseudo-hoop, the following sets
$$
\begin{array}{r  c l}
X{\bm{\cdot}} Y &=& \{x\cdot y\mid x\in X, y\in Y\},\\[0.1cm]
X{\bm{\to}} Y &=& \{x\to y\mid x\in X, y\in Y\},\\[0.1cm]
X{\bm{\leadsto}} Y &=& \{x\leadsto y\mid x\in X, y\in Y\}.
\end{array}
$$

\begin{theorem}
Let $F$ be a filter in the pseudo-hoop $\mathbf A=(A;\cdot,\to,\leadsto,1)$ and let  $X\subseteq A$. 
Then the sets $\nu_F(\nu_F(X)\bmto X)$ and $\nu_F(\nu_F(X)\bmleadsto X)$ are a filters.

If $\hat\nu_F(x)$ exists for some $x\in A$ then also $\hat\nu_F(\hat\nu_F(x)\to x)$  and 
$\hat\nu_F(\hat\nu_F(x)\leadsto x)$ exist 
and moreover $\hat\nu_F(\nu_F(x)\to x)$ and  $\hat\nu_F(\nu_F(x)\leadsto x)$  are idempotent.
\end{theorem}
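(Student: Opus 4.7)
The plan is to separate the two assertions: first the filter property of $\nu_F(\nu_F(X)\bmto X)$ and its $\bmleadsto$-partner, and then the existence and idempotence statement in the case $X = \{x\}$.

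For the filter property, I write $T := \nu_F(\nu_F(X)\bmto X)$; the $\bmleadsto$-version will be handled symmetrically. Non-emptiness ($1 \in T$) and upper-closure are immediate from the definitions. The real work will be multiplicative closure: fix $a,b \in T$, $u \in \nu_F(X)$, $x \in X$, and the task is to deduce $u \to x \le ab$. I plan the following four-step chain. First, setting $u = 1$ shows $x \in \nu_F(X) \bmto X$, so $T \subseteq \nu_F(X)$ and in particular $b \ge x$. Second, from $u \to x \le b$ combined with divisibility $(u \to x) \cdot u = u \wedge x = x$ (using $x \le u$), I get $x \le bu$, hence $bu \in \nu_F(X)$. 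Third, consequently $(bu) \to x \in \nu_F(X) \bmto X$, and using pseudo-hoop axiom (iii) this element equals $b \to (u \to x)$, which $a$ dominates. Fourth, multiplying the inequality $b \to (u \to x) \le a$ by $b$ on the right and invoking divisibility again as $(b \to z) \cdot b = b \wedge z$ with $z = u \to x \le b$ yields $u \to x \le ab$. The $\bmleadsto$-case will use axiom (iv), i.e.\ $(ab) \leadsto z = b \leadsto (a \leadsto z)$, and the dual divisibility $b \cdot (b \leadsto z) = b \wedge z$.

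For the second assertion I set $c := \hat\nu_F(x)$. Since $\to$ is antitone in its first slot and $c = \min \nu_F(x)$, the set $\{u \to x : u \in \nu_F(x)\}$ has maximum $c \to x$, giving
\[
\nu_F(\nu_F(x) \to x) \;=\; F \cap \uparrow(c \to x) \;=\; \nu_F(\{c \to x\}).
\]
In particular $\hat\nu_F(\nu_F(x) \to x)$ and $\hat\nu_F(\hat\nu_F(x) \to x)$ refer to the same element as soon as one of them exists. By the first part $T := F \cap \uparrow(c \to x)$ is already a filter, and once its minimum $d$ has been produced, idempotence is automatic: multiplicative closure of $T$ gives $d^2 \in T$ and hence $d^2 \ge d$, while $d^2 \le d$ is trivial, so $d^2 = d$.

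The remaining point, and the main obstacle, is producing $d = \min T$. A useful rigidity extracted from the first-part calculation is that every $a \in T$ must stabilise $c$ on both sides: $ac \ge (c \to x)\cdot c = x$ forces $ac \in \nu_F(x)$, hence $ac \ge c$, which combined with $ac \le c$ yields $ac = c$, and $ca = c$ is symmetric. Moreover every $a \in T$ dominates both $c$ (because $T \subseteq \nu_F(X)$) and $c \to x$, so $d$ ought to be the least element of $F$ above both $c$ and $c \to x$; in the lattice case this is visibly $d = c \vee (c \to x)$, which lies in $F$ by upper-closure (as $c \in F$ and $c \le d$) and in $T$ since it dominates $c \to x$, and every other $a \in T$ dominates both summands and hence $d$. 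The general pseudo-hoop case will exhibit the same element using only the filter structure of $T$, the two-sided stabilisation of $c$, and the pseudo-hoop operations. The $\leadsto$-claim is the mirror image.
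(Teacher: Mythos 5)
Your first part is correct and is in substance the paper's own argument, carried out at the level of elements rather than of the set operations: your four-step chain ($x=(u\to x)\cdot u\le bu$, hence $bu\in\nu_F(X)$; then $a\ge (bu)\to x=b\to(u\to x)$; then multiply by $b$ on the right and use divisibility) is exactly the paper's pair of identities $\nu_F(\nu_F(X)\bmto X)\bmdot\nu_F(X)=\nu_F(X)$ and $\nu_F(Y)\bmto Y=Y$ for $Y=\nu_F(X)\bmto X$, unwound. Likewise your reduction of the second claim to producing $\min T$ for $T=F\cap{\uparrow}(c\to x)$ with $c=\hat\nu_F(x)$, and the observation that idempotence of that minimum is then automatic from multiplicative closure and integrality, agree with what the paper does.

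The genuine gap is that you never produce $\min T$ outside ``the lattice case''. This is not a deferrable detail but the entire content of the existence assertion: a pseudo-hoop is only a $\wedge$-semilattice, so the join $c\vee(c\to x)$ is not available for free, and since the upper bounds of $\{c,\;c\to x\}$ in $A$ are precisely the elements of $T$ (any such upper bound lies in $F$ because it dominates $c\in F$, and conversely every element of $T$ dominates $c$ because $T\subseteq\nu_F(x)$), the existence of that join is literally equivalent to the existence of $\min T$. Your sentence ``the general pseudo-hoop case will exhibit the same element'' is therefore a promise to prove the theorem rather than a step toward it. The paper's route is an auxiliary claim --- if $\hat\nu_F(x)$ exists and $x\le y$ then $\hat\nu_F(y)$ exists and equals $y\vee\hat\nu_F(x)$ --- applied to $y=\hat\nu_F(x)\to x\ge x$; you would need to prove that claim (or give some other construction of the minimum) to close the argument. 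Two smaller remarks: the paper itself is quite terse about why $y\vee\hat\nu_F(x)$ exists, so you have in fact located the real crux; and your side remark that $ca=c$ ``is symmetric'' is unjustified, since the mirror computation needs $a\ge c\leadsto x$, whereas membership in $T$ only gives $a\ge c\to x$, and these differ in a noncommutative pseudo-hoop.
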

\begin{proof} It is enough to verify the statement for the implication $\bmto$. The remaining case 
for $\bmleadsto $ can be shown dually. 
Now, let us prove first that 
$$\nu_F(\nu_F(X)\bmto X)\bmdot\nu_F(X)=\nu_F(X)$$ for any $X\subseteq A$. Clearly, 
we have  $1\in\nu_F(\nu_F(X)\bmto X)$ and thus 
$\nu_F(\nu_F(X)\bmto X)\bmdot\nu_F(X)\supseteq\nu_F(X)$ holds.

Conversely, assume that  $a\in \nu_F(\nu_F(X)\bmto X)$ and $b\in\nu_F(X).$ Then 
$a, b\in F$ and 
$ b\to x \leq a$ for all $x\in X$. Consequently 
$x=x\wedge b = (b\to x)\cdot b\leq a\cdot b$ for all $x\in X$. 
Because $a\cdot b\in F$ (both $a$ and $b$ belong to $F$) also $a\cdot b\in \nu_F(X)$. 
Hence  $\nu_F(\nu_F(X)\bmto X)\bmdot \nu_F(X)\subseteq \nu_F(X)$.
Together we obtain $\nu_F(\nu_F(X)\bmto X)\bmdot \nu_F(X)= \nu_F(X)$.

One can easily check that the set equality $A\bmto(B\bmto C)=(A\bmdot B)\bmto C$ holds.
Thus we can compute
\begin{eqnarray*}
\nu_F(\nu_F(X)\bmto X)\bmto (\nu_F(X)\bmto X) &=&(\nu_F(\nu_F(X)\bmto X)\bmdot \nu_F(X))\bmto X\\
&=&\nu_F(X)\bmto X.
\end{eqnarray*}

Denoting $Y:=\nu_F(X)\bmto X$ in the previous equality we obtain
$$\nu_F(\nu_F(Y)\bmto Y)=\nu_F(Y),$$
which together with $\nu_F(\nu_F(Y)\bmto Y)\bmdot \nu_F(Y)=\nu_F(Y)$ gives 
$$\nu_F(Y)\bmdot\nu_F(Y)=\nu_F(Y).$$
Thus $\nu_F(Y)=\nu_F(\nu_F(X)\bmto X)$ is a filter.

If the element $\hat\nu_F(x)$ exists and if $x\leq y$ then 
clearly $y\leq y\vee \hat\nu_F(x)\in F$. Moreover, any $a\in F$ such that $y\leq a$ 
satisfies $x\leq a$ and consequently $\hat\nu_F(x)\leq a.$ Thus 
$y\vee \hat\nu_F(x)\leq y\vee a=a$ holds and we have 
proved that $\hat\nu_F(y)$ exists and that $\hat\nu_F(y)=y\vee\hat\nu_F(x).$

Because $x\leq \hat\nu_F(x)\to x$ then $\hat\nu_F(\hat\nu_F(x)\to x)=(\hat\nu_F(x)\to x) \vee\hat\nu_F(x)$ 
exists and it is the least element of the filter $\nu_F(\nu_F(\{x\})\bmto \{x\}).$ 
Thus $\hat\nu_F(\hat\nu_F(x)\to x)$ is an idempotent element.
\end{proof}

\begin{lemma}\label{mb_lemma1}
Let $\mathbf A=(A;\cdot,\to,\leadsto,1)$ be a pseudo-hoop and let $F\subseteq A$ be 
a filter with a least element then $F$ is a normal filter.
\end{lemma}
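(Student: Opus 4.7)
The plan is to reduce the claim to a single algebraic fact: the least element $e$ of $F$ is a \emph{central} idempotent. From that, normality of $F$ --- in its standard formulation $x\to y\in F\iff x\leadsto y\in F$ for all $x,y\in A$ --- drops out immediately. First I would observe that $e$ is idempotent: since $F$ is closed under $\cdot$ we have $e\cdot e\in F$, hence $e\le e\cdot e$ by minimality of $e$ in $F$, while $e\cdot e\le e\cdot 1=e$ is automatic. Thus $e\cdot e=e$ and $F=\{a\in A\mid a\ge e\}$ is the principal filter generated by $e$.

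The technical core is to show that $e$ is central, i.e.\ $e\cdot x=e\wedge x=x\cdot e$ for every $x\in A$. For the left identity, divisibility gives $e\cdot(e\leadsto x)=e\wedge x$; multiplying on the left by $e$ and using $e\cdot e=e$ produces $e\wedge x=e\cdot(e\wedge x)\le e\cdot x$, while the reverse inequality $e\cdot x\le e\wedge x$ is immediate from $e\cdot x\le e$ and $e\cdot x\le x$. A mirror calculation starting from $(e\to x)\cdot e=e\wedge x$ and multiplying on the right by $e$ yields $x\cdot e=e\wedge x$, so $e$ is central.

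With centrality in hand, normality is a straightforward chain of adjunctions: $x\to y\in F\iff e\le x\to y\iff e\cdot x\le y\iff x\cdot e\le y\iff e\le x\leadsto y\iff x\leadsto y\in F$. Equivalent set-theoretic versions of normality (such as $x\cdot F=F\cdot x$, or the coincidence of the left and right congruences associated with $F$) follow from the same centrality argument combined with $F=[e,1]$.

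The only obstacle is the centrality step, and even there the computation is just a few lines; the real pitfall is choosing the correct divisibility identity and the correct side on which to multiply by $e$. The asymmetry between $\to$ and $\leadsto$ in a noncommutative pseudo-hoop means the ``$e\cdot x$'' and ``$x\cdot e$'' halves have to be argued in parallel rather than derived from one another.
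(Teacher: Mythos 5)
Your proof is correct and follows essentially the same route as the paper's: both establish that the least element of $F$ is idempotent and, using divisibility together with that idempotence, show it commutes with every element of $A$, then deduce normality from the chain of residuation equivalences $e\le x\to y\iff e\cdot x\le y\iff x\cdot e\le y\iff e\le x\leadsto y$. The only cosmetic difference is that you prove the slightly stronger centrality $e\cdot x=e\wedge x=x\cdot e$ (and justify idempotence explicitly), whereas the paper computes $x\cdot a=a\cdot x\cdot a=a\cdot x$ directly.
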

\begin{proof}
If $a=\bigwedge F$ is the least element of the filter $F$ 
then it is an idempotent element. Using the divisibility we obtain for any $x\in A$ 
\begin{eqnarray*}
x\cdot a &=& (x\cdot a)\wedge a \\
&=& a\cdot(a\leadsto (x\cdot a) )\\
&=& a\cdot a\cdot(a\leadsto (x\cdot a) )\\
&=& a\cdot (a\wedge (x\cdot a) )\\
&=& a\cdot x\cdot a.
\end{eqnarray*}
and
\begin{eqnarray*}
a\cdot x &=& (a\cdot x)\wedge a \\
&=& (a\to (a\cdot x) )\cdot a\\
&=& (a\to (a\cdot x) )\cdot a\cdot a\\
&=&  (a\wedge (a\cdot x) )\cdot a\\
&=& a\cdot x\cdot a.
\end{eqnarray*}

Hence $a\cdot x=x\cdot a$ holds. 

Finally, $x\to y\in F$ if, and only if, $a\leq x\to y$ if, and only if, $x\cdot a= a\cdot x\leq y$ if, and only if, $a\leq x \leadsto  y$ if, and only if, $x\leadsto y\in F$. Thus $F$ is a normal.

\end{proof}

\begin{theorem}
Let $\mathbf A=(A;\cdot,\to,\leadsto,1)$  be a pseudo-hoop and let $M\subseteq A.$ If there is 
an element $x\not\in M^\perp\cdot M^{\perp\perp}$ such that $\hat\nu_{M^\perp\cdot M^{\perp\perp}}(x)$ exists then $\mathbf A$ is subdirectly reducible.
\end{theorem}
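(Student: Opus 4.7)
The strategy is to produce two nontrivial normal filters $N_1, N_2$ of $\mathbf A$ with $N_1 \cap N_2 = \{1\}$; the associated pseudo-hoop congruences then have diagonal intersection, giving a subdirect embedding of $\mathbf A$ into $(\mathbf A/N_1)\times(\mathbf A/N_2)$ and so establishing subdirect reducibility.

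Set $F := M^\perp \cdot M^{\perp\perp}$ and $\xi := \hat\nu_F(x)$; since $x \notin F$ one has $\xi > x$. By Theorem \ref{mb_theor1} the element $\xi \in F$ decomposes uniquely as $\xi = u \wedge v$ with $u \in M^\perp$ and $v \in M^{\perp\perp}$. Transporting the product order along the embedding $f$, I would first verify that $u = \hat\nu_{M^\perp}(x)$ and $v = \hat\nu_{M^{\perp\perp}}(x)$: any $u' \in M^\perp$ with $u' \geq x$ satisfies $u' \vee v = 1$ by definition of $M^{\perp\perp}$, so $u' \wedge v \in F$ and $u' \wedge v \geq x$, whence $u' \wedge v \geq \xi$, and injectivity of $f$ forces $u' \geq u$; the argument for $v$ is symmetric.

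Applying the previous theorem to the filters $M^\perp$ and $M^{\perp\perp}$ in place of $F$, the elements
\[
u^* := \hat\nu_{M^\perp}(u \to x) = u \vee (u \to x) \in M^\perp, \qquad v^* := \hat\nu_{M^{\perp\perp}}(v \leadsto x) = v \vee (v \leadsto x) \in M^{\perp\perp}
\]
exist and are idempotent. Consequently $u^* \vee v^* = 1$, and the principal filters $[u^*)$ and $[v^*)$ have idempotent least elements, so by Lemma \ref{mb_lemma1} they are normal filters of $\mathbf A$. Their intersection equals $[u^* \vee v^*) = \{1\}$.

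The main obstacle is to show that both $u^*$ and $v^*$ are strictly below $1$, so that $[u^*)$ and $[v^*)$ are genuinely nontrivial. Were $u^* = 1$, the identity $u \vee (u \to x) = 1$ together with Lemma \ref{MB-Lemma1} and the divisibility identity $(u \to x) \cdot u = u \wedge x = x$ (using $x \leq u$) would give $x = (u \to x) \wedge u$; a parallel observation applies for $v^*$. The delicate technical point is to prove that such identities force the alternative factor $u \to x$ (respectively $v \leadsto x$) into the complementary polar $M^{\perp\perp}$ (respectively $M^\perp$), so that $x$ would be expressible as an element of $M^\perp \cdot M^{\perp\perp} = F$, contradicting the hypothesis $x \notin F$. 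This polar-chasing argument via divisibility is the technical crux of the proof.
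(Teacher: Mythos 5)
Your proposal is incomplete at precisely the point you yourself flag as the ``technical crux'': nothing in the argument establishes $u^*<1$ and $v^*<1$, and without that the filters $[u^*)$ and $[v^*)$ may both collapse to $\{1\}$, in which case no subdirect decomposition is obtained. Moreover, the route you sketch for closing this gap does not work. From $u^*=u\vee(u\to x)=1$ you obtain only that $u\to x\in\{u\}^{\perp}$; but since $u\in M^{\perp}$ we have $M^{\perp\perp}=\bigcap_{m\in M^{\perp}}\{m\}^{\perp}\subseteq\{u\}^{\perp}$, so membership in $\{u\}^{\perp}$ is a \emph{weaker} condition than membership in $M^{\perp\perp}$, not a stronger one. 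Hence the identity $x=(u\to x)\cdot u$ does not exhibit $x$ as an element of $M^{\perp}\cdot M^{\perp\perp}$, and no contradiction with $x\notin M^{\perp}\cdot M^{\perp\perp}$ results. The first half of your argument (the decomposition $\xi=u\wedge v$, the identification $u=\hat\nu_{M^{\perp}}(x)$ and $v=\hat\nu_{M^{\perp\perp}}(x)$, the existence and idempotence of $u^*$ and $v^*$, normality of $[u^*)$ and $[v^*)$ via Lemma \ref{mb_lemma1}, and $[u^*)\cap[v^*)=\{1\}$) is sound, but the theorem is not proved.

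The paper avoids this difficulty by arranging nontriviality and orthogonality to come from a single object. It forms the idempotent $y=\hat\nu_{F}(\hat\nu_{F}(x)\to x)\in F$ for $F=M^{\perp}\cdot M^{\perp\perp}$ and first argues that $y$ belongs to neither polar: if it did, then $\hat\nu_F(x)\to x$ would lie in $F$, whence $x=(\hat\nu_F(x)\to x)\cdot\hat\nu_F(x)\in F\cdot F\subseteq F$, contradicting $x\notin F$. Only then does it split $y=y_1\wedge y_2$ via Theorem \ref{mb_theor1}; nontriviality of $y_1$ and $y_2$ is then immediate, since $y_1=1$ would force $y=y_2\in M^{\perp\perp}$, and symmetrically for $y_2$. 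If you wish to retain your more symmetric two-sided setup, you would need to relate $u^*$ and $v^*$ back to an idempotent of $F$ itself in this manner; as written, the proof has a genuine hole.
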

\begin{proof}
Let us assume that the element $\hat\nu_{M^\perp\cdot M^{\perp\perp}}(x)$ exists. Then we denote the idempotent 
$$y:=\hat\nu_{M^\perp\cdot M^{\perp\perp}}(\hat\nu_{M^\perp\cdot M^{\perp\perp}}(x)\to x).$$
 We will show that $y\not\in M^{\perp},M^{\perp\perp}.$ 

Assume to the contrary that $y\in M^\perp$. Then for any $a\in M^{\perp\perp}$ we have $a\vee (\nu_{M^\perp\cdot M^{\perp\perp}}(x)\to x) =a\vee y= 1$ and thus $\nu_{M^\perp\cdot M^{\perp\perp}}(x)\to x\in M^{\perp}\subseteq {M^\perp\cdot M^{\perp\perp}}.$ Clearly, also $\nu_{M^\perp\cdot M^{\perp\perp}}(x)\in{M^\perp\cdot M^{\perp\perp}}$ and thus $(\nu_{M^\perp\cdot M^{\perp\perp}}(x)\to x)\cdot \nu_{M^\perp\cdot M^{\perp\perp}}(x)\leq x\in {M^\perp\cdot M^{\perp\perp}}$ which is absurd. Analogously it can be proved that $y\not\in M^{\perp\perp}.$

Because $y\in M^\perp\cdot M^{\perp\perp},$ there exist elements 
$1\not=y_1\in M^\perp$ and $1\not=y_2\in M^{\perp\perp}$ such that 
$y=y_1\wedge y_2$ (see Theorem \ref{mb_theor1}). Clearly, Theorem \ref{mb_theor1} shows 
that $\langle y_1,y_2\rangle$ is an idempotent element in $M^\perp\times M^{\perp\perp}$ 
(because $y$ is an idempotent in $ M^\perp\cdot M^{\perp\perp}$). Consequently, both 
$y_1$ and $y_2$ are idempotent too.

Lemma \ref{mb_lemma1} shows that $\mathcal{F}(y_1)$ and $\mathcal{F}(y_2)$ are a normal filters. 
Moreover $y_1\vee y_2=1$ yields $\mathcal{F}(y_1)\cap \mathcal{F}(y_2)=\{ 1\}$ and thus 
$\mathbf A$ is subdiretly reducible. 
\end{proof}

\section{Several types of prime filters in residuated $\vee$-semilattices}
\label{filterkondo}

In the paper \cite{gasse} Van Gasse et al. asked whether, for any commutative residuated lattice
$L$, if prime filters and $\vee$-prime filters coincide,
then $L$ must be an MTL-algebra. The  affirmative answer was given 
in \cite{kondotur} by Kondo and Turunen. We extend their result to the setting 
of noncommutative residuated semilattices. 

\begin{definition}\label{leadstfilt}
Let $A$ be a residuated $\vee$-semilattice. A filter $F$  of $A$ is called a
{\em $\to$-prime filter} ({\em $\leadsto$-prime filter}) if $x \to y \in F$  or 
$y \to x \in F$ ($x \leadsto y \in F$  or 
$y \leadsto x \in F$) for all $x, y \in A$. A filter $F$  of $A$ is said to be a
{\em prime filter} if it is both a $\to$-prime filter and a $\leadsto$-prime filter. A filter $F$  of $A$ is called a {\em $\vee$-prime filter} if 
$x \vee y \in F$ yields $x \in F$ or $y \in F$ for all $x, y \in A$. 
By ${\mathcal P}{\mathcal F}(A)$ (${\mathcal P}{\mathcal F}_{\to}(A)$, 
${\mathcal P}{\mathcal F}_{\leadsto}(A)$, 
${\mathcal P}{\mathcal F}_{\vee}(A)$, respectively) 
we mean the class of all prime filters
of L ($\to$-prime filters, $\leadsto$-prime filters, $\vee$-prime filters, respectively). 
\end{definition}

\begin{lemma} \label{bcxd} Let $A$ be an integral residuated $\vee$-semilattice. Then 
\begin{enumerate}
\item ${\mathcal P}{\mathcal F}_{\to}(A)\subseteq 
{\mathcal P}{\mathcal F}_{\vee}(A)$; 
\item ${\mathcal P}{\mathcal F}_{\leadsto}(A)\subseteq 
{\mathcal P}{\mathcal F}_{\vee}(A)$; 
\item ${\mathcal P}{\mathcal F}(A)\subseteq 
{\mathcal P}{\mathcal F}_{\vee}(A)$.
\end{enumerate}
\end{lemma}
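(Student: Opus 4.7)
The plan is to reduce (3) to (1) (or equivalently to (2)) since a prime filter is by definition both $\to$-prime and $\leadsto$-prime, and to prove (1) and (2) symmetrically. I will focus on (1), and (2) is handled by replacing $\to$ with $\leadsto$ throughout.

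Let $F$ be a $\to$-prime filter and suppose $x\vee y\in F$. By $\to$-primeness, either $x\to y\in F$ or $y\to x\in F$. Consider the first case; I want to deduce $y\in F$ (the second case yields $x\in F$ by symmetry). The key identity to exploit is the distributivity of $\cdot$ over existing joins: because $-\cdot c$ has a right adjoint ($\leadsto$) in any residuated poset, it preserves the join $x\vee y$. Thus $(x\vee y)\cdot(x\to y)=x\cdot(x\to y)\vee y\cdot(x\to y)$. Then residuation gives $x\cdot(x\to y)\le y$, and integrality (the unit $1$ is top, so $\cdot$ is decreasing in each argument) gives $y\cdot(x\to y)\le y$. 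Therefore $(x\vee y)\cdot(x\to y)\le y$.

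Next I will observe that filters in an integral quantum B-algebra are closed under the underlying multiplication $\cdot$: for $a,b\in F$ we have $a\cdot b\in F\cdot F\subseteq F$, using the description of $F\cdot F$ in $U(A)$ for a residuated poset (take the witnesses $x=a$, $y=b$). Since $x\vee y\in F$ and $x\to y\in F$, this yields $(x\vee y)\cdot(x\to y)\in F$, and upward closedness of $F$ then gives $y\in F$. So $F$ is $\vee$-prime. Part (2) runs identically, using $(y\leadsto x)\cdot(x\vee y)\le x$ (with the distributivity $(x\vee y)$ over the left factor now irrelevant — one distributes on the correct side, since $c\cdot-$ also preserves joins).

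Part (3) is immediate: any $F\in\mathcal{PF}(A)$ is in particular $\to$-prime, so by (1) it is $\vee$-prime.

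There is no real obstacle here; the only point that needs care is noting that in the \emph{integral} setting one has both $x\cdot(x\to y)\le y$ (residuation) and $y\cdot(x\to y)\le y$ (integrality $\Rightarrow$ $\cdot$ is decreasing), so that after distributing the product over $x\vee y$ both summands are bounded by $y$. Without integrality the second inequality can fail and the argument breaks, which is precisely why the hypothesis appears in the lemma.
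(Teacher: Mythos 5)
Your strategy is sound and is, in substance, the paper's own argument, but as written the key inequality is on the wrong side for the paper's (noncommutative) residuation convention. The paper defines residuation by $x\cdot y\le z \Leftrightarrow x\le y\to z \Leftrightarrow y\le x\leadsto z$, so what residuation actually yields is $(x\to y)\cdot x\le y$, not your $x\cdot(x\to y)\le y$; the latter is equivalent to $x\to y\le x\leadsto y$, which fails in general precisely in the noncommutative structures this lemma targets. So you must form the product with $x\to y$ on the \emph{left}: $(x\to y)\cdot(x\vee y)=(x\to y)\cdot x\,\vee\,(x\to y)\cdot y\le y\vee y=y$, using residuation for the first summand and integrality for the second. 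Dually, for part (2) one has $x\cdot(x\leadsto y)\le y$, so the product goes on the \emph{right}: $(x\vee y)\cdot(x\leadsto y)\le y$ — again the opposite side from your $(y\leadsto x)\cdot(x\vee y)\le x$. (Also, under this convention the right adjoint of $-\cdot c$ is $c\to{-}$, not $\leadsto$; this does not hurt you, since both $c\cdot{-}$ and $-\cdot c$ preserve existing joins, and the integrality summand is harmless on either side because $a\cdot b\le a$ and $a\cdot b\le b$ both hold.)

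With the sides corrected, your proof is complete and closes the same way as the paper's: both arguments reduce to producing an element of $F$ that is $\le y$. The paper routes the computation through the first argument of $\to$ instead of distributing the product: from $x\to y\in F$ and $1=y\to y\in F$ it gets $(x\to y)\cdot(y\to y)\le(x\to y)\wedge(y\to y)=(x\vee y)\to y\in F$, and then applies $((x\vee y)\to y)\cdot(x\vee y)\le y$. Your version, once repaired, is marginally more direct, but the two are the same idea; parts (2) and (3) are handled identically in both.
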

\begin{proof} 1. Let $F\in{\mathcal P}{\mathcal F}_{\to}(A)$, 
$x, y\in A$, $x\vee y\in F$. Then $x \to y \in F$  or 
$y \to x \in F$. Assume first that $x \to y \in F$. Clearly, 
$1=y \to y \in F$. Hence also 
$(x \to y)\cdot (y \to y)\leq (x \to y)\wedge (y \to y)=(x\vee y) \to y \in F$. 
It follows that $((x\vee y) \to y)\cdot (x\vee y)\leq y \in F$. Similarly, 
$y \to x \in F$ yields that $x\in F$. 

\noindent{}2. It follows by corresponding reasonings as in (1) applied to 
$\leadsto$. 

\noindent{}3. It follows from the fact that 
${\mathcal P}{\mathcal F}(A)=%
{\mathcal P}{\mathcal F}_{\to}(A)\cap%
{\mathcal P}{\mathcal F}_{\leadsto}(A)$.
\end{proof}

\begin{definition}\label{mtlleadstfilt}
Let $A$ be an integral residuated $\vee$-semilattice. $A$ is called a 
{\em $\to$-MTL-algebra} (a {\em $\leadsto$-MTL-algebra}, respectively) if 
$(x\to y)\vee (y\to x)=1$  ($(x\leadsto y)\vee (y\leadsto x)=1$, respectively)
for all $x, y\in A$. $A$ is said to be a {\em pseudo MTL-algebra} if  $A$ is both 
 a {$\to$-MTL-algebra} and a {$\leadsto$-MTL-algebra}.
\end{definition}

\begin{lemma} \label{cxd} Let $A$ be an integral residuated $\vee$-semilattice. Then 
\begin{enumerate}
\item If $A$ is a 
{$\to$-MTL-algebra} then  ${\mathcal P}{\mathcal F}_{\to}(A)= 
{\mathcal P}{\mathcal F}_{\vee}(A)$. 
\item If $A$ is a 
{$\leadsto$-MTL-algebra} then  
${\mathcal P}{\mathcal F}_{\leadsto}(A)=
{\mathcal P}{\mathcal F}_{\vee}(A)$; 
\item If $A$ is a pseudo {MTL-algebra} then ${\mathcal P}{\mathcal F}(A)=
{\mathcal P}{\mathcal F}_{\vee}(A)$.
\end{enumerate}
\end{lemma}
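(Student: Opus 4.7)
The plan is to exploit the inclusions already established in Lemma~\ref{bcxd} and reduce everything to the $\vee$-primeness of the filter together with the MTL identity. Since the MTL identity forces a join of the relevant implications to equal $1$, which lies in every filter, $\vee$-primeness immediately supplies the missing direction.

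For part (1), the inclusion ${\mathcal P}{\mathcal F}_{\to}(A) \subseteq {\mathcal P}{\mathcal F}_{\vee}(A)$ is already given by Lemma~\ref{bcxd}(1). For the converse, I would take any $F \in {\mathcal P}{\mathcal F}_{\vee}(A)$ and arbitrary $x,y \in A$. Since $A$ is a $\to$-MTL-algebra, $(x \to y) \vee (y \to x) = 1 \in F$ (as $F$ is a filter, hence upward closed, and contains $1$). Then $\vee$-primeness yields $x \to y \in F$ or $y \to x \in F$, which is exactly what it means for $F$ to be a $\to$-prime filter. Part (2) is formally identical, using the $\leadsto$-MTL identity $(x \leadsto y) \vee (y \leadsto x) = 1$ in place of the $\to$-MTL one.

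For part (3), I would invoke (1) and (2) together with the set-theoretic identity
\[
{\mathcal P}{\mathcal F}(A) = {\mathcal P}{\mathcal F}_{\to}(A) \cap {\mathcal P}{\mathcal F}_{\leadsto}(A),
\]
which follows straight from Definition~\ref{leadstfilt}. Under the pseudo MTL hypothesis both ${\mathcal P}{\mathcal F}_{\to}(A) = {\mathcal P}{\mathcal F}_{\vee}(A)$ and ${\mathcal P}{\mathcal F}_{\leadsto}(A) = {\mathcal P}{\mathcal F}_{\vee}(A)$ hold, so their intersection equals ${\mathcal P}{\mathcal F}_{\vee}(A)$, and combined with Lemma~\ref{bcxd}(3) this gives the desired equality.

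There is essentially no obstacle here; the only point to be a little careful about is verifying that $1 \in F$ for every filter (which is immediate from integrality and the fact that $F$ is a non-empty upper subset) so that the MTL identity can be transported into $F$. Once that is recorded, the three parts are routine one-line arguments.
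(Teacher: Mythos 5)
Your argument is correct and coincides with the paper's own proof: the reverse inclusions come from Lemma~\ref{bcxd}, and the forward ones follow by applying $\vee$-primeness to the MTL identity $(x\to y)\vee(y\to x)=1\in F$ (resp.\ its $\leadsto$ analogue), with part (3) obtained by combining (1) and (2). No differences worth noting.
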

\begin{proof} 1. Let $F\in{\mathcal P}{\mathcal F}_{\vee}(A)$, 
$x, y\in A$. Since $(x\to y)\vee (y\to x)=1\in F$ we have that 
$x \to y \in F$  or $y \to x \in F$. It follows that 
$F\in{\mathcal P}{\mathcal F}_{\to}(A)$.

\noindent{}2. It follows by corresponding reasonings as in (1) applied to 
$\leadsto$. 

\noindent{}3.  Let $A$ be  a pseudo {MTL-algebra}. Then we have by (1) and (2) that  
${\mathcal P}{\mathcal F}_{\to}(A)= {\mathcal P}{\mathcal F}_{\vee}(A)$ 
and ${\mathcal P}{\mathcal F}_{\leadsto}(A)={\mathcal P}{\mathcal F}_{\vee}(A)$. 
It follows that ${\mathcal P}{\mathcal F}_{\leadsto}(A)={\mathcal P}{\mathcal F}_{\to}(A)=%
{\mathcal P}{\mathcal F}(A)={\mathcal P}{\mathcal F}_{\vee}(A)$.
\end{proof}

\begin{theorem}  \label{oddelth}
{\rm{}(The Prime filter theorem for 2-sided residuated $\vee$-semilattices).}  Let $A$ be 
a 2-sided residuated 
$\vee$-semilattice, $F$ be a filter of $A$ and $a\notin  $F. Then there exists a $\vee$-prime filter  $G$  of 
$A$ that includes $F$ and does not contain $a$.
\end{theorem}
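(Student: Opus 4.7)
The plan is a Zorn's lemma argument; the substantive content lies in showing that a maximal filter avoiding $a$ is $\vee$-prime. Let $\mathcal{G}$ be the collection of filters of $A$ containing $F$ and omitting $a$, ordered by inclusion. Then $F\in\mathcal{G}$, and every chain in $\mathcal{G}$ has its union as an upper bound, since the directed union of filters is a filter (recalled in Section~\ref{basicn}) and the union still omits $a$. Zorn's lemma then supplies a maximal element $G\in\mathcal{G}$.

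It remains to show $G$ is $\vee$-prime. I would argue by contradiction: suppose $x\vee y\in G$ but $x,y\notin G$. By maximality, both $[G\cup\{x\})$ and $[G\cup\{y\})$ properly extend $G$ and hence contain $a$. The explicit description of generated filters in a $2$-sided residuated poset recalled in Section~\ref{basicn} yields $g_1,\dots,g_n,h_1,\dots,h_m\in G$ with
\[
u:=g_1\cdot x\cdot g_2\cdot x\cdots x\cdot g_n\leq a, \qquad v:=h_1\cdot y\cdot h_2\cdot y\cdots y\cdot h_m\leq a.
\]
The goal is to exhibit an element of $[G\cup\{x\vee y\})$ below $a$: since $x\vee y\in G$ that filter equals $G$, giving the desired contradiction $a\in G$.

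Set $P:=g_1\cdots g_n\cdot h_1\cdots h_m$, so $P\in G$ (filters are closed under products) and $P\leq g_i$, $P\leq h_j$ for all $i,j$ by $2$-sidedness. Put $k:=n+m-1$ and consider
\[
W:=P\cdot(x\vee y)\cdot P\cdot(x\vee y)\cdots(x\vee y)\cdot P
\]
with $k$ copies of $P$ and $k-1$ copies of $x\vee y$. Because $\cdot$ has both residuals, it distributes over binary joins, so $W$ equals $\bigvee_{\varepsilon\in\{x,y\}^{k-1}} W_\varepsilon$, where $W_\varepsilon:=P\cdot\varepsilon_1\cdot P\cdot\varepsilon_2\cdots\varepsilon_{k-1}\cdot P$. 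To conclude it suffices to verify $W_\varepsilon\leq a$ for every $\varepsilon$.

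This last step is the main obstacle. Since $\varepsilon$ has $n+m-2$ coordinates, a pigeonhole argument forces $\varepsilon$ to contain either at least $n-1$ copies of $x$ or at least $m-1$ copies of $y$. In the first case I would fix indices $i_1<\dots<i_{n-1}$ with $\varepsilon_{i_j}=x$ and accompanying indices $1=l_1\leq i_1<l_2\leq i_2<\dots<l_n\leq k$, and read off the sub-product $P\cdot x\cdot P\cdot x\cdots x\cdot P$ (with $n$ $P$'s and $n-1$ $x$'s) from $W_\varepsilon$. In a $2$-sided residuated poset a product is bounded above by every sub-product obtained by deleting factors, so $W_\varepsilon$ lies below this extracted sub-product, which by $P\leq g_i$ and monotonicity is itself below $u\leq a$. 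The $y$-dominated case is dual and gives $W_\varepsilon\leq v\leq a$. Hence $W\leq a$, yielding $a\in[G\cup\{x\vee y\})=G$, the required contradiction.
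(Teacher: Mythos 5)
Your proof is correct and follows essentially the same route as the paper's: pass to a maximal filter $G\supseteq F$ avoiding $a$, use the explicit description of $[G\cup\{x\})$ and $[G\cup\{y\})$, expand a long product of elements of $G$ interleaved with copies of $x\vee y$ by distributivity, and bound each summand by a sub-product via $2$-sidedness and a pigeonhole count. The only differences are cosmetic: the paper takes $w=\prod_{i,j}u_i\cdot v_j$ and the exponent $2k$ with $k=\max\{m,n\}$ (splitting on $\mathrm{card}(S)\geq k$ versus $\leq k$), whereas you take $P=g_1\cdots g_n h_1\cdots h_m$ and the sharper length $n+m-2$ with the exact pigeonhole.
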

\begin{proof} Let $G$ be a maximal filter containing $F$ that does not contain $a$. Let us show that 
$G$ is $\vee$-prime. Assume that  $x, y\in A$, $x\vee y\in G$ and $x, y\notin G$. Then, by 
maximality of $G$, we get that 
$$
\begin{array}{r c l l}
a\in [G\cup \{x\})&=&
 \{z \in A \mid& z\geq  u_1 \cdot x\cdot u_2 \cdot x \cdot \dots \cdot x \cdot  u_n\ \text{for some}\  n\in {\mathbb N},  n\geq 1\\ 
& & &\text{and}\  u_1, u_2, \dots, u_n \in  G\}.\\
\end{array}
$$
\noindent 
and 
$$
\begin{array}{r c l l}
a\in [G\cup \{y\})&=&
 \{z \in A \mid& z\geq  v_1 \cdot y\cdot v_2 \cdot y \cdot \dots \cdot y \cdot  v_m\ \text{for some}\  m\in {\mathbb N},  m\geq 1\\ 
& & &\text{and}\  v_1, v_2, \dots, v_m \in  G\}.\\
\end{array}
$$
This yields that there are $m, n\in {\mathbb N}, m,  n\geq 1$ and $u_1, u_2, \dots, u_n, v_1, v_2, \dots, v_m  \in  G$ such that 
$ u_1 \cdot x\cdot u_2 \cdot x \cdot \dots \cdot x \cdot  u_n\leq a$ and 
$v_1 \cdot y\cdot v_2 \cdot y \cdot \dots \cdot y \cdot  v_m\leq a$. Let us put 
$k=\mathrm{max} \{m, n\}$  
and  $w=\prod_{1\leq i\leq n, 1\leq j\leq m}u_i\cdot v_j$. Then 
$w\in G$. We also put  $w_i=w$ for all $i\leq k$.

Evidently, $ (wx)^{k}=w_1 \cdot x\cdot w_2 \cdot x \cdot \dots \cdot x \cdot  w_k\leq u_1 \cdot x\cdot u_2 \cdot x \cdot \dots \cdot x \cdot  u_n\leq a$ 
and $ (wy)^{k}=w_1 \cdot y\cdot w_2 \cdot y \cdot \dots \cdot y \cdot  w_k\leq v_1 \cdot y\cdot v_2 \cdot y \cdot \dots \cdot y \cdot  v_m\leq a$.

Let us compute the element 
$c=[w(x\vee y)]^{2k}\in G$. First, for any subset 
$S\subseteq \{1, \dots, 2k\}$ we put 
$c_S=\prod_{1\leq i\leq 2k} (w\cdot z_i)$, where $z_i=x$ if $i\in S$ and $z_i=y$ otherwise. Clearly, 
if $\mathrm{card}(S)\geq k$ then $c_S\leq (wx)^{k}\leq a$ and similarly if  
$\mathrm{card}(S)\leq k$ then $c_S\leq (wy)^{k}\leq a$. Since 
$c=\bigvee_{S\subseteq \{1, \dots, 2k\}}c_S$ we get that $c\leq a$, i.e. $a\in G$, a contradiction.
\end{proof}
We  then have the following corollary.

\begin{corollary}\label{inters} Any filter of a 2-sided residuated  $\vee$-semilattice is equal to the intersection of
the prime filters that include it.
\end{corollary}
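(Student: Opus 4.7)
The plan is to deduce the corollary directly from Theorem~\ref{oddelth}, where ``prime filter'' is understood here in the sense of $\vee$-prime filter, i.e.\ the kind of filter actually produced by that theorem. Let $F$ be a filter of the 2-sided residuated $\vee$-semilattice $A$, and set
$$
\widetilde F \;:=\; \bigcap\{G \in {\mathcal P}{\mathcal F}_{\vee}(A) \mid F \subseteq G\}.
$$

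The inclusion $F \subseteq \widetilde F$ is immediate from the definition of $\widetilde F$, since $F$ is contained in every filter $G$ occurring in the intersection. The core step is the reverse inclusion $\widetilde F \subseteq F$, and this is precisely what Theorem~\ref{oddelth} delivers contrapositively: given $a \in A$ with $a \notin F$, the theorem produces a $\vee$-prime filter $G$ with $F \subseteq G$ and $a \notin G$; in particular $a \notin \widetilde F$. Combining both inclusions gives $F = \widetilde F$.

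I do not expect any serious obstacle here, as all the work has been done in Theorem~\ref{oddelth}; the corollary is essentially the standard ``filter equals intersection of prime filters containing it'' packaging of a separation theorem. The only point that deserves a brief remark is the convention on the word ``prime'': Lemma~\ref{bcxd} shows ${\mathcal P}{\mathcal F}(A) \subseteq {\mathcal P}{\mathcal F}_{\vee}(A)$, so the intersection taken over $\vee$-prime filters is a priori smaller (hence closer to $F$) than the intersection taken over prime filters in the stricter sense of Definition~\ref{leadstfilt}; either reading of the corollary therefore follows by the same argument, provided we acknowledge that the separating filter supplied by Theorem~\ref{oddelth} is $\vee$-prime.
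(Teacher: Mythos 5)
Your core argument is correct and is exactly the paper's intended proof: the paper presents the corollary as an immediate consequence of Theorem~\ref{oddelth}, and the contrapositive separation argument you give (for every $a\notin F$ pick a $\vee$-prime $G\supseteq F$ with $a\notin G$) is the standard packaging of that theorem. One caveat about your closing remark: the claim that ``either reading of the corollary follows by the same argument'' is backwards. Since Lemma~\ref{bcxd} gives ${\mathcal P}{\mathcal F}(A)\subseteq{\mathcal P}{\mathcal F}_{\vee}(A)$, the collection of Definition~\ref{leadstfilt}-prime filters containing $F$ is the \emph{smaller} collection, so the intersection over it is the \emph{larger} set; from $F=\bigcap\{G\in{\mathcal P}{\mathcal F}_{\vee}(A)\mid F\subseteq G\}$ you only get $F\subseteq\bigcap\{G\in{\mathcal P}{\mathcal F}(A)\mid F\subseteq G\}$, which is trivial and not the asserted equality. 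Establishing the corollary under the literal reading of ``prime'' would require separating $a$ from $F$ by a filter that is both $\to$-prime and $\leadsto$-prime, which Theorem~\ref{oddelth} does not supply (and cannot in general: by Theorem~\ref{bacxd} the $\vee$-prime filters coincide with the prime filters only when $A$ is a pseudo MTL-algebra). So the corollary should be read, as you in fact do in the body of your proof, with ``prime'' meaning ``$\vee$-prime''; with that understanding your proof is complete and agrees with the paper.
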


\begin{theorem} \label{bacxd} Let $A$ be an integral residuated $\vee$-semilattice. Then 
\begin{enumerate}
\item $A$ is a 
{$\to$-MTL-algebra} if and only if   ${\mathcal P}{\mathcal F}_{\to}(A)= 
{\mathcal P}{\mathcal F}_{\vee}(A)$. 
\item $A$ is a 
{$\leadsto$-MTL-algebra}  if and only if 
${\mathcal P}{\mathcal F}_{\leadsto}(A)=
{\mathcal P}{\mathcal F}_{\vee}(A)$; 
\item $A$ is a {pseudo MTL-algebra}  if and only if  ${\mathcal P}{\mathcal F}(A)=
{\mathcal P}{\mathcal F}_{\vee}(A)$.
\end{enumerate}
\end{theorem}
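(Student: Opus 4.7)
The plan is to establish the converse (“only if”) directions in all three parts, since Lemma \ref{cxd} already supplies the forward ones. For (1) and (2), I will argue by contraposition, using the Prime filter theorem (Theorem \ref{oddelth}) to separate a bad element from the trivial filter and obtain a $\vee$-prime filter that fails to be $\to$-prime (respectively $\leadsto$-prime). Part (3) will then be a purely set-theoretic consequence of (1), (2) and Lemma \ref{bcxd}.

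A preliminary observation needed throughout: an integral residuated $\vee$-semilattice is automatically 2-sided. Indeed, $1$ is the top, and monotonicity of multiplication (available in any residuated poset) yields $x\cdot y\leq x\cdot 1=x$ and $x\cdot y\leq 1\cdot y=y$. Hence Theorem \ref{oddelth} applies to $A$.

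For (1), assume $\mathcal{P}\mathcal{F}_{\to}(A)=\mathcal{P}\mathcal{F}_{\vee}(A)$ and suppose for contradiction that $A$ is not a $\to$-MTL-algebra. Then there exist $x,y\in A$ with $c:=(x\to y)\vee(y\to x)<1$, so $c\notin\{1\}$. By Theorem \ref{oddelth} applied to the filter $\{1\}$ and the element $c$, there is a $\vee$-prime filter $G$ with $c\notin G$. Since $G$ is an upper set and both $x\to y$ and $y\to x$ lie below $c$, neither of them belongs to $G$, so $G$ fails to be $\to$-prime. This contradicts $G\in\mathcal{P}\mathcal{F}_{\vee}(A)=\mathcal{P}\mathcal{F}_{\to}(A)$. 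Part (2) is obtained by the same argument with $\leadsto$ in place of $\to$.

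For (3), the direction $\Rightarrow$ is Lemma \ref{cxd}(3). Conversely, suppose $\mathcal{P}\mathcal{F}(A)=\mathcal{P}\mathcal{F}_{\vee}(A)$. By definition, $\mathcal{P}\mathcal{F}(A)=\mathcal{P}\mathcal{F}_{\to}(A)\cap\mathcal{P}\mathcal{F}_{\leadsto}(A)$, so combining with Lemma \ref{bcxd}(1) we get
$$\mathcal{P}\mathcal{F}_{\vee}(A)=\mathcal{P}\mathcal{F}_{\to}(A)\cap\mathcal{P}\mathcal{F}_{\leadsto}(A)\subseteq\mathcal{P}\mathcal{F}_{\to}(A)\subseteq\mathcal{P}\mathcal{F}_{\vee}(A),$$
which forces $\mathcal{P}\mathcal{F}_{\to}(A)=\mathcal{P}\mathcal{F}_{\vee}(A)$; the analogous chain with Lemma \ref{bcxd}(2) gives $\mathcal{P}\mathcal{F}_{\leadsto}(A)=\mathcal{P}\mathcal{F}_{\vee}(A)$. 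Parts (1) and (2) then show that $A$ is simultaneously a $\to$-MTL-algebra and a $\leadsto$-MTL-algebra, that is, a pseudo MTL-algebra. The only non-routine ingredient is recognising that integrality suffices to invoke Theorem \ref{oddelth}; everything else is an extraction of information from the upper-set structure of a $\vee$-prime filter lying below $c$.
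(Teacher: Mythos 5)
Your proof is correct and follows essentially the same route as the paper: the key step in (1) and (2) is to separate $(x\to y)\vee(y\to x)$ from the filter $\{1\}$ by a $\vee$-prime filter via Theorem \ref{oddelth} and derive a contradiction, and (3) is the formal combination of (1), (2) and Lemmas \ref{bcxd} and \ref{cxd}. The only differences are cosmetic improvements: you make explicit that integrality implies 2-sidedness (so Theorem \ref{oddelth} applies), and you bypass the paper's case distinction on whether $\bigcap\{G\in{\mathcal F}(A)\mid G\neq\{1\}\}$ equals $\{1\}$, since applying the prime filter theorem directly to the filter $\{1\}$ handles both cases at once.
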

\begin{proof} 1.  $\Longrightarrow$: It follows from Lemma \ref{cxd}, (i).  

$\Longleftarrow$: Assume that  ${\mathcal P}{\mathcal F}_{\to}(A)= 
{\mathcal P}{\mathcal F}_{\vee}(A)$ and  that  $A$ is not  a 
{$\to$-MTL-algebra}. Hence there are $a, b\in A$ such that 
$(a\to b)\vee (b\to a)\not =1$. Let 
$G_1=\bigcap \{ G\in {\mathcal F}{(A)} \mid G\not=\{1\} \} $. Assume first that 
$G_1=\{1\}$. Then there exists a $\vee$-prime filter $P$ such that 
$(a\to b)\vee (b\to a)\not \in P$. Since $P$ is also a  $\to$-prime filter we have that 
$(a\to b)\in P$  or $ (b\to a) \in P$, i.e., 
$(a\to b)\vee (b\to a)\in P$ which yields a contradiction.

Second, assume that $G_1\not=\{1\}$. Then $\{1\}$ is a $\vee$-prime filter, 
hence a  $\to$-prime filter. It follows that either $1\leq a\to b$ or 
$1\leq b\to a$, i.e., $(a\to b)\vee (b\to a) =1$, a contradiction again.

2. It follows by the same arguments as in part 1.

3. It follows from parts 1 and 2.
\end{proof}

\section*{Acknowledgment}

This is a pre-print of an article published in International Journal of Theoretical Physics. 
The final authenticated version of the article is available online at: https://doi.org/10.1007/s10773-015-2608-0.

\end{document}